\def\XXint#1#2#3{{\setbox0=\hbox{$#1{#2#3}{\int}$ }
\vcenter{\hbox{$#2#3$ }}\kern-.59\wd0}}
\newcommand{\grad}{\nabla}
\renewcommand{\div}{\grad\cdot}
\newcommand{\laplace}{\Delta}
\newcommand{\R}{\mathbf{R}}
\newcommand{\uv}{u^{\nu}}
\newcommand{\om}{\omega}
\newcommand{\omv}{\omega^{\nu}}
\renewcommand{\div}{\grad\cdot}
\newcommand{\curl}{\nabla \times}
\newcommand{\phiv}{\phi^{\nu}}
\DeclareMathOperator{\id}{id}
\def\opt{{\mathrm{opt}}}
\newcommand{\zo}{\zeta_{\opt}}
\newcommand{\po}{\pi_{\opt}}
\DeclareMathOperator{\spt}{spt}
\newcommand{\D}{\ensuremath{\mathcal{D}}}
\newcommand{\eps}{\varepsilon}
\newcommand{\e}{\varepsilon}
\newtheorem{theorem}{Theorem}[section]
\newtheorem{lemma}[theorem]{Lemma}
\newtheorem{definition}[theorem]{Definition}
\newtheorem{corollary}[theorem]{Corollary}
\newtheorem{prop}[theorem]{Proposition}
\newcommand{\Ua}{G^{1}_{\eps}}
\newcommand{\Ub}{G^{2}_{\eps}}
\newcommand{\Dd}{\mathcal{D}_{\delta}}
\renewcommand{\L}{\mathcal{L}}
\definecolor{darkblue}{rgb}{0,0,0.6}
\numberwithin{equation}{section}
\keywords{Transport and continuity equations, DiPerna--Lions theory, Renormalized and Lagrangian solutions, Euler equation, Vanishing viscosity.}
\begin{document}

\title[Continuity and Euler equations with $L^1$ vorticity]{Eulerian and Lagrangian solutions to the continuity and Euler equations with $L^1$ vorticity}
\author{Gianluca Crippa, Camilla Nobili}
\address[GC, CN]{Departement Mathematik und Informatik, Universit\"at Basel, Switzerland. \emph{Email addresses: }\rm \texttt{gianluca.crippa@unibas.ch, camilla.nobili@unibas.ch}.}
\author{Christian Seis}
\address[CS]{Institut f\"ur Angewandte Mathematik, Universit\"at Bonn, Germany. \rm \emph{Email address:} \texttt{seis@iam.uni-bonn.de}.}
\author{Stefano Spirito}
\address[SS]{DISIM - Dipartimento di Ingegneria e Scienze dell'Informazione e Matematica,
		Universit\'a degli Studi dell'Aquila, L'Aquila, Italy. \rm  \emph{Email address:} \texttt{stefano.spirito@univaq.it}.}
%\email{gianluca.crippa@unibas.ch}\email{camilla.nobili@unibas.ch}\email{seis@iam.uni-bonn.de}\email{stefano.spirito@univaq.it}

\date{\today}

\begin{abstract}
	In the first part of this paper we establish a uniqueness result for continuity equations with velocity
	field whose derivative can be represented by a singular integral operator of an $L^1$ function, extending the Lagrangian theory in \cite{BouchutCrippa13}. The proof is based on a combination of a stability estimate via optimal transport
	techniques developed in~\cite{Seis16a} and some tools from harmonic analysis introduced in \cite{BouchutCrippa13}. In the second part of the paper, we address a question that arose in \cite{FilhoMazzucatoNussenzveig06}, namely whether 2D Euler solutions obtained via vanishing viscosity are renormalized (in the sense of DiPerna and Lions)
	when the initial data has low integrability. We show that this is the case even when the initial vorticity is only in~$L^1$, extending the proof for the $L^p$ case in \cite{CrippaSpirito15}.
	%In the present work, we study the linear continuity equation and the 2D Euler equations in the case when the velocity field has a merely integrable curl. We develop a well-posedness theory for the linear equation and derive renormalization properties of viscosity solutions to the Euler equation.
\end{abstract}

\maketitle

%\tableofcontents

\section{Introduction}

In the present work we discuss the equivalence of the Eulerian and the Lagrangian descriptions for solutions
to some equations of fluid dynamics with velocity field with a certain weak regularity. To be more specific, we study the continuity equation and the 2D Euler equations in the case when the velocity field has a merely integrable curl (i.e., the
vorticity of the fluid is $L^1$, but not better). We develop a well-posedness theory for the linear continuity equation 
and derive renormalization properties for solutions to the Euler equation in vorticity form obtained as vanishing 
viscosity limits. %Such properties have some relevance in the Kraichnan--Batchelor (KB) theory of 2D turbulence.
%fluid motions as described by the 2D Euler equation with $L^1$ vorticity, and consider the uniqueness of the corresponding linear continuity equations in $\R^n$ with low regularity assumptions on the velocity fields. We furthermore study renormalization properties of the respective solutions. The investigation of such properties of Euler solutions is related to
%the Kraichnan--Batchelor (KB) theory of 2D turbulence. At the same time, the analysis of linear transport equations  is of independent mathematical interest.
 
Before formulating the precise questions that we address in this paper and motivating the related background from physics, let us review some basic features of the linear theory. The continuity equation 
describes the transport of a conserved quantity $\rho$ by a velocity field $u$. Given an initial configuration $\rho_0$, the Cauchy problem takes the simple form
\begin{equation}\label{CE}
	\left\{
	\begin{array}{rccl}
		\partial_t\rho+\nabla\cdot (u\rho) \!\!\! & = \!\!\!  & 0      & \mbox{in }(0,T)\times \R^n , \\
		\rho(0,\cdot)                     \!\!\!  & =  \!\!\! & \rho_0 & \mbox{in }\R^n.              
	\end{array}			\right.								
\end{equation}
In the classical case of smooth velocity fields and data, the problem of well-posedness is typically solved using the method of characteristics: The unique solution is transported by the flow associated to the velocity field. Since this perspective describes the solution with respect to Langrangian coordinates, we will accordingly refer to it as a \emph{Lagrangian solution}.

Out of the smooth setting there are different ways to give meaning to the continuity equation \eqref{CE}. Whenever the velocity field is regular enough so that a (suitably generalized) flow is well-defined, Lagrangian solutions are reasonable to  be considered. A standard alternative notion which rather takes the partial differential equations (PDE) point of view is that of \emph{distributional (or Eulerian) solutions}. These, however, are well-defined only as long as the product term $u\rho$ is locally integrable. In their seminal paper~\cite{DiPernaLions89}, DiPerna and Lions introduce a new notion of generalized solutions, the so-called \emph{renormalized solutions}, which give sense to \eqref{CE} even if both $u$ and $\rho$ are merely integrable. Roughly speaking, one requires that
\begin{equation}\label{e:rininf}
	\partial_t \beta(\rho) + \div \big( u \beta(\rho) \big) = \big( \beta(\rho) - \rho \beta'(\rho) \big) \div u
\end{equation}
for any smooth function $\beta : \R \to \R$ satisfying suitable growth conditions. Notice that~\eqref{e:rininf} can be 
{\em formally} derived from~\eqref{CE} by applying the chain rule, and that~\eqref{e:rininf} makes sense even when 
one cannot define distributionally the product $u\rho$, due to the low integrability of the two factors.
%
%a renormalized solution is  a distributional solution to which the chain rule applies. % It is immediately verified that such solutions are unique and stable under approximation. Moreover, 
In the case of divergence-free velocity fields, $\div u=0$, these solutions preserve any $L^q$ norm, i.e.,
\begin{equation}
	\label{i3}
	\|\rho(t)\|_{L^q} = \|\rho_0\|_{L^q}
\end{equation}
for any $t\ge0$, whenever the right-hand side is finite.

DiPerna and Lions's theory \cite{DiPernaLions89} in fact shows that these three concepts of solution coincide if 
$u\in L^1(W^{1,p})$ with $\div u\in L^1(L^{\infty})$ and $\rho\in L^{\infty}(L^q)$, where $\frac{1}{p}+\frac{1}{q}=1$. Furthermore, Lagrangian and renormalized solutions still agree 
even if we do not assume any integrability on $\rho$. In either case the Cauchy problem for the continuity equation \eqref{CE}  is well-posed. Ambrosio \cite{Ambrosio04} later generalized the  theory to velocity fields in $L^1(BV)$ and solutions in $L^\infty(L^\infty)$. The precise definition of Lagrangian, distributional and renormalized solutions will be recalled in Section \ref{sec:pre} below. For a review of the DiPerna--Lions theory and its more recent developments we refer to the lecture notes \cite{AmbrosioCrippa14}. Here and at some later occurrences, for notational convenience, we write $L^r(X) = L^r((0,T);X)$ for a function space $X$ on $\R^n$, and if $X = L^r(\R^n)$ we simply write $L^r = L^r((0,T)\times \R^n)$.

Our first main result in this paper concerns a theory for the continuity equation with weakly differentiable velocity fields that fall out of the DiPerna--Lions class $L^1(W^{1,1})$. To be more specific, we consider velocity fields $u$ whose gradient is a singular integral of an $L^1$ function, i.e., $\grad u = K\ast  \omega$ for some singular integral kernel $K$. Typical examples are two- or three-dimensional velocity fields whose curl, which is the vorticity in the context of fluid dynamics, is merely integrable, i.e.,
\begin{equation}\label{i4}
	\omega = \curl u \in L^1.
\end{equation}
In these cases, $K$ is the gradient of the Biot--Savart kernel; see e.g~\cite[Ch.~2]{MajdaBertozzi02}.
Because Calder\'on--Zygmund maximal regularity estimates just fail in $L^1$, in general $\grad u$ does not belong to $ L^1$ but only to $L^1(L^{1,\infty})$. In this regard, the following result extends the theory  in \cite{DiPernaLions89}.

\begin{theorem}\label{Thm1}
	There exists exactly one distributional solution in the class $L^{\infty}(L^{\infty}\cap L^1)$ to the continuity equation with velocity field $u$ with bounded divergence and satisfying $\grad u = K\ast \omega$ for some $\omega\in L^1$. This solution is also a Lagrangian solution and a renormalized solution.
	Also the converse statement holds true: Every Lagrangian or renormalized solution in the class $L^{\infty}(L^1\cap L^{\infty})$ is also a distributional solution.
				
	If in addition $u$ is divergence-free, then there exists a unique renormalized solution in the class $L^{\infty}(L^0)$, which is also a Lagrangian solution. Conversely, every Lagrangian solution in the class $L^{\infty}(L^0)$ is also a renormalized solution.
\end{theorem}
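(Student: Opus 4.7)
The plan is to establish the uniqueness of distributional solutions in $L^\infty(L^1\cap L^\infty)$ as the central result, and then to deduce every other statement from it by combining this uniqueness with a Lagrangian solution that is \emph{a priori} both distributional and renormalized. In particular, the equivalence of the three notions in the bounded divergence case will become essentially automatic once uniqueness is in place, and the divergence-free $L^0$ statement will follow by a truncation reduction to the $L^\infty$ case.

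\textbf{Step 1: Construction.} Given $\rho_0\in L^1\cap L^\infty$, I would invoke the Bouchut--Crippa theory \cite{BouchutCrippa13} to obtain the unique regular Lagrangian flow $X$ associated to $u$, and define $\rho(t,\cdot)\ldef(X_t)_\#\rho_0$ weighted by the Jacobian factor produced by the (bounded) divergence. Standard computations give $\rho\in L^\infty(L^1\cap L^\infty)$, show that $\rho$ solves \eqref{CE} distributionally, and, via the chain rule along the flow, that $\rho$ is renormalized. In the divergence-free case the same construction makes sense for $\rho_0\in L^0$ without any integrability.

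\textbf{Step 2: Uniqueness of distributional solutions (the heart of the matter).} Let $\rho_1,\rho_2\in L^\infty(L^1\cap L^\infty)$ be two distributional solutions with the same initial datum. I would adapt the optimal-transport stability functional of \cite{Seis16a}, working with a Kantorovich-type quantity
\begin{equation}
D_\delta(t)\ldef \inf_{\pi_t}\int\log\Bigl(1+\tfrac{|x-y|}{\delta}\Bigr)\,d\pi_t(x,y),
\end{equation}
where the infimum runs over transport plans between the positive parts of $\rho_1(t)$ and $\rho_2(t)$ (after a Hahn decomposition, together with the analogous term for negative parts). Differentiation along the two CEs produces, after standard manipulations, a bound of the form $\dot D_\delta(t)\le C\int\frac{|u(x)-u(y)|}{\delta+|x-y|}\,d\pi_t(x,y)$. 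Plugging in the Bouchut--Crippa pointwise estimate $|u(x)-u(y)|\le |x-y|\bigl(M(\nabla u)(x)+M(\nabla u)(y)\bigr)$ (with $M$ a suitable local maximal operator) reduces the task to controlling $\int M(\nabla u)\,\rho_i(t,\cdot)\,dx$. Since $\nabla u=K\ast\omega$ with $\omega\in L^1$, $M(\nabla u)$ is only in weak-$L^1$, but the harmonic-analysis toolkit of \cite{BouchutCrippa13} upgrades this into a logarithmic-type integrability that is sufficient to yield an Osgood modulus for $\dot D_\delta$. Integrating the Osgood inequality gives a uniform-in-$\delta$ bound $D_\delta(t)\le C_t$; letting $\delta\to 0$ forces $\rho_1=\rho_2$.

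\textbf{Step 3: Equivalences and the $L^0$ case.} The Lagrangian solution from Step 1 is both distributional and renormalized, so uniqueness of distributional solutions (Step 2) identifies the three notions in $L^\infty(L^1\cap L^\infty)$. The reverse implications are immediate: any Lagrangian solution in this class is distributional by change of variables, and any renormalized solution in this class is distributional by choosing $\beta(s)=s$ on the range of $\rho$, which is admissible because of the $L^\infty$ bound. For the divergence-free $L^\infty(L^0)$ case, let $\rho,\tilde\rho$ be two renormalized solutions with the same datum $\rho_0\in L^0$ and take truncations $\beta_N\in C^1_b(\R)$ with $\beta_N(s)=s$ on $[-N,N]$. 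Because $\div u=0$, the right-hand side of \eqref{e:rininf} vanishes, so both $\beta_N(\rho)$ and $\beta_N(\tilde\rho)$ are distributional solutions in $L^\infty(L^1\cap L^\infty)$ with common initial datum $\beta_N(\rho_0)$; Step 2 yields $\beta_N(\rho)=\beta_N(\tilde\rho)$, and sending $N\to\infty$ gives $\rho=\tilde\rho$. Existence via the pushforward $\rho=\rho_0\circ X_t^{-1}$ is clear from Step 1, and this solution is renormalized by direct chain rule.

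\textbf{Main obstacle.} The entire argument hinges on extracting a time-integrable Osgood modulus for $t\mapsto D_\delta(t)$, uniformly in $\delta$. The DiPerna--Lions estimate $\int M(\nabla u)\,dx<\infty$ is \emph{not} available here, because $\nabla u$ merely lies in $L^{1,\infty}$; the technical core of the proof is precisely to convert the weak-$L^1$ control of the maximal function of a singular integral of an $L^1$ datum into the sharper logarithmic integrability required by the transport-distance argument, which is exactly where the harmonic-analysis tools of \cite{BouchutCrippa13} must be grafted onto the optimal-transport scheme of \cite{Seis16a}.
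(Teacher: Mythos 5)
Your overall architecture matches the paper exactly: build the Lagrangian solution from the Bouchut--Crippa flow, prove uniqueness of distributional solutions via a Kantorovich--Rubinstein stability functional in the spirit of \cite{Seis16a}, deduce the equivalences, and reduce the divergence-free $L^0$ statement to the bounded case via truncation (your $\beta_N$ should additionally vanish near $0$ so that $\beta_N(\rho_0)\in L^1$, matching Definition~\ref{def:rs}, but this is a cosmetic fix). Step~2, however, misdescribes the crucial mechanism in two ways that would each sink the argument as written.

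First, you invoke the classical Morrey estimate with the Hardy--Littlewood maximal operator $M(\nabla u)$. The paper explicitly flags that this is unusable here: $\nabla u=S\omega$ with $\omega\in L^1$ gives $\nabla u\in L^{1,\infty}$ only, and the composition $M\circ S$ is \emph{not} bounded from $L^1$ to $L^{1,\infty}$. The replacement, from \cite{BouchutCrippa13}, is the smooth maximal function $M_\sigma$ (for which $\|M_\sigma(S\omega)\|_{L^{1,\infty}}\lesssim\|\omega\|_{L^1}$ does hold) together with the $\eps$-dependent decomposition $G=G^1_\eps+G^2_\eps$ of the difference-quotient bound, with $\|G^1_\eps\|_{L^1(L^{1,\infty})}\le\eps$ and $\|G^2_\eps\|_{L^1(L^2)}\le C_\eps$. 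Without this decomposition there is no smallness to exploit.

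Second, and more fundamentally, there is no Osgood inequality and no uniform-in-$\delta$ bound $D_\delta(t)\le C_t$. In the $W^{1,p}$ case ($p>1$) one does obtain a $\delta$-independent bound, which is why \cite{Seis16a} closes the argument. Here $\int M_\sigma(\nabla u)\,d|\rho|$ is infinite in general, so the estimate cannot be $\delta$-uniform. What the paper actually proves (Proposition~\ref{pr3}) is
\[
\sup_t\Dd(\rho_t)\ \lesssim\ \Dd(\rho_0)+\varepsilon\,\|\rho\|_{L^1}\Bigl[1+\log\bigl(\tfrac{1}{\varepsilon\delta}\cdots\bigr)\Bigr]+C_\varepsilon\|\rho\|_{L^\infty(L^2)},
\]
a bound that diverges like $\varepsilon\log(1/\delta)$ as $\delta\to0$. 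The key move is the pointwise \emph{minimum} in \eqref{15} between $\delta^{-1}(|u(x)|+|u(y)|)$ and $G^1_\eps(x)+G^1_\eps(y)$, combined with the log-interpolation Lemma~\ref{L10} between $L^{1,\infty}$ and $L^{p,\infty}$ with respect to the measure $|\rho|\,dx\,dt$; this is precisely what produces the factor $\varepsilon(1+\log\frac1{\varepsilon\delta})$. One then concludes not by letting $\delta\to0$ in a uniform bound but by showing $\Dd(\rho_t)/|\log\delta|\to0$ after choosing $\varepsilon=\varepsilon(\delta)\to0$ slowly, and finally invoking Lemma~\ref{lemma-extra} to downgrade to the genuine metric $\D$. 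Your "Osgood modulus" sketch, taken at face value, would need $M(\nabla u)\in L^1_{\mathrm{loc}}$, which is exactly what fails at $p=1$.
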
 

Here, $L^0=L^0(\R^n)$ denotes the set of all measurable functions $\rho$ on $\R^n$ with values in $\bar \R$ such that $\L^n(\{|\rho|>\lambda\})$ is finite for every $\lambda>0$.

A precise list of assumptions on the singular integral kernel $K$ will be given in the introduction of Section~\ref{sec:pre} below. 

Existence and uniqueness of Lagrangian solutions in the setting of our paper were established earlier in \cite{BouchutCrippa13}, along with  a full theory for the associated ordinary differential equation. 
The nature of the approach of~\cite{BouchutCrippa13} does not allow, however, the
treatment of distributional or renormalized solutions. The major problem in the analysis of distributional
solutions in the setting of~\cite{BouchutCrippa13} (and of the present paper) is the failure of a suitable adaptation 
of the method developed by DiPerna and Lions. To be more specific---for the convenience of the experts among the readers---, it is not clear how a commutator estimate could be established. 

Indeed, instead of following \cite{DiPernaLions89}, the authors of \cite{BouchutCrippa13} exploited the approach
introduced earlier in~\cite{CrippaDeLellis08}. This work provides {\em quantitative} stability, compactness, and 
regularity estimates for Lagrangian flows associated to velocity fields in $L^1(W^{1,p})$ with $p>1$. By using 
more sophisticate harmonic analysis tools, the authors of~\cite{BouchutCrippa13} managed to extend this 
approach to the case $p=1$, and to the case when the gradient of the velocity field is a singular integral of
an integrable function. See also~\cite{Jabin10,HaurayLeBris11,Bohun-2016-bis} for some further extensions 
of this approach.

A PDE analogue of \cite{CrippaDeLellis08,Jabin10} is only very recent. In \cite{Seis16a} a new quantitative 
theory is provided for distributional solutions of the continuity equation in the DiPerna--Lions setting.
This new theory is based on stability estimates for logarithmic Kantorovich--Rubinstein distances, variants of which were introduced earlier in \cite{BOS,OSS,Seis13b}. In the case of velocity fields in $ L^1(W^{1,p})$ with $p>1$, the new stability estimates are optimal \cite{Seis16b} and allow for sharp error estimates for numerical schemes \cite{SchlichtingSeis17,SchlichtingSeis16}.
Let us also mention, in this connection, that quantitative compactness results have been recently
derived in~\cite{BreschJabin15} by a smart technique involving the propagation of suitable
``logarithmic regularity norms'' weighted by solutions of the adjoint equations with a suitable penalization 
term. The authors apply this to get new existence results for the compressible Navier--Stokes equations.

The present work combines the techniques developed in \cite{Seis16a} with certain harmonic analysis 
tools and a new estimate for the difference quotients of the velocity field established in \cite{BouchutCrippa13}.
We will review some tools from \cite{BouchutCrippa13} and \cite{Seis16a} in Sections \ref{sec:pre} and \ref{sec:ot}.

The drawback of the approach in \cite{Seis16a} is that it only applies to distributional solutions and that it does not allow
for a source term on the right hand side of the equation (see for instance~\cite{ColomboCrippaSpirito15} for the
study of the equation with a source with low integrability). As a consequence, the development of a full renormalization
theory in our context requires new ideas. Our strategy is able to handle renormalized solutions only for divergence-free velocity fields, which causes the restriction in the second statement of Theorem \ref{Thm1}.

It turns out that we can use Theorem \ref{Thm1} in the context of the 2D Euler equations with $L^1$ vorticity. Notice that, if $u$ is a two-dimensional divergence-free velocity field described by the Euler equations  and $\omega =\curl u$ the vorticity, then $\omega$ solves the (nonlinear) vorticity equation 
\begin{equation}
	\label{i5}
	\partial_t \omega + u\cdot \grad \omega = 0,
\end{equation}
which can be brought in the conservation form \eqref{CE}. It is clear that the linear theory does not entail uniqueness for the nonlinear problem. Moreover, because $\omega$ is not necessarily bounded, distributional solutions of \eqref{i5} are in general not defined, and in any case Theorem \ref{Thm1} \emph{does not} imply that every $L^1$ distributional solution of the vorticity equation \eqref{i5} is a renormalized or a Lagrangian solution. Combined
with the duality approach developed in~\cite{CrippaSpirito15}, our theory, however, applies to certain particular solutions, namely those which are obtained as the zero-viscosity limit of the Navier--Stokes equations:
%% It is well known that there are two descriptions of the motion of a fluid: the Lagrangian description and the Eulerian one. In a smooth context, say $u\in C(0,T;H^{s}(\R^{2})$, $s>2$\footnote{\red{C: Wht $s>2$? Reference?}}, it is also well-known that the two descriptions coincide, namely Lagrangian solutions are Eulerian and vice versa. An interesting question is to establish whether the two descriptions coincide also in a non smooth setting. 
%%We recall that if we have a weak solution of Euler equations with vorticity in $L^{\infty}(L^{p})$ with $p>2$ then these solutions are also Lagrangian. This was remarked in \cite{FilhoMazzucatoNussenzveig06} and follows basically by the fact that every weak solution of the continuity equations in $L^{\infty}(L^{p})$ is also renormalized if the velocity field has bounded divergence and is $L^{1}(W^{1,q})$ with $q=p/(p-1)$. In the case of of $p<2$, we don't know if this is still true, but we can say that the weak solutions constructed as a limit of exact\footnote{\red{C: What are ``exact'' solutions?}}solutions of Euler equations with vorticity in $L^{\infty}(L^{p})$, $1\leq p<2$, are Lagrangian, see \cite{FilhoMazzucatoNussenzveig06}\footnote{\red{C: Where do they consider Lagrangian solutions? I can't find it:-( I also think they only treat the $p\ge 2$ case...}} and \cite{BohunBouchutCrippa16} for the $p=1$ case. A different,  very natural and physically meaningful approximation is the vanishing viscosity limit: 
We call $\omega$ a \emph{viscosity solution} to the Euler equations \eqref{i5} if 
\[
	\omega = \lim_{\nu \downarrow 0} \, \omega^{\nu},
\]
where $\omega^{\nu}$ is the curl of  some divergence-free velocity field $u^{\nu}$ and (uniquely) solves   the Navier--Stokes vorticity equation with viscosity $\nu$, i.e.,
\[
	\partial_t \omega^{\nu} + u^{\nu}\cdot \grad \omega^{\nu}  = \nu\laplace\omega^{\nu}.
\]
%In \cite{CrippaSpirito15} is proved that viscosity solutions of \eqref{i5} with vorticity in $L^{\infty}(0,T;L^{p}(\R^{2}))$ with $1<p<2$ are Lagrangian. In this paper, by exploiting the uniqueness result of Theorem \ref{Thm1}, we address exactly the $p=1$ case. 
Our result is the following:
\begin{theorem}
	\label{Thm2}
	For initial vorticities in $L^1$, viscosity solutions to the Euler vorticity equations are renormalized solutions and also Lagrangian solutions.
\end{theorem}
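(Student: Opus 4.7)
The plan is to combine the linear well-posedness of Theorem~\ref{Thm1} with the duality-based passage to the limit of \cite{CrippaSpirito15}, thereby extending the latter from $L^p$ with $p>1$ to the endpoint $p=1$. Standard $L^1$-contraction for the parabolic vorticity equation gives $\|\omv\|_{L^\infty((0,T);L^1)}\le \|\om_0\|_{L^1}$ uniformly in $\nu$, so the viscosity solution $\om$ lies in $L^\infty((0,T);L^1)\subset L^\infty(L^0)$, and the divergence-free Biot--Savart velocity $u$ satisfies $\grad u = K\ast\om$ with $\om\in L^1$. This places the linear continuity equation with coefficient $u$ in the setting of the divergence-free statement of Theorem~\ref{Thm1}, which supplies the equivalence between renormalized and Lagrangian solutions in $L^\infty(L^0)$. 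It therefore suffices to show that $\om$ is a renormalized solution of the vorticity equation; the Lagrangian property will then follow automatically.

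For each $\nu>0$, the smooth vorticity $\omv$ satisfies the divergence-form chain-rule identity
\[
\partial_t\be(\omv) + \div(\uv\be(\omv)) = \nu\laplace\be(\omv) - \nu\be''(\omv)|\grad\omv|^2,
\]
for any bounded $\be\in C^2(\R)$ with bounded derivatives and $\be(0)=0$. I would test this against a smooth compactly supported $\phi$ and pass to the limit $\nu\to 0$. The diffusive term $\nu\int\be(\omv)\laplace\phi$ vanishes by boundedness of $\be(\omv)$. For the convective term, strong $L^1_{\mathrm{loc}}$-compactness of $\omv$---coming from the uniform $L^\infty_t L^1_x$ bound, together with $L^1$-equi-integrability obtained by truncation of the initial datum and $L^1$-contraction, and with time regularity from the equation via Aubin--Lions---gives $\be(\omv)\to\be(\om)$ strongly in $L^r_{\mathrm{loc}}$ for every finite $r$. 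Combined with strong $L^q_{\mathrm{loc}}$-convergence $\uv\to u$ for $q<2$, obtained from Biot--Savart and weak Young, this yields $\uv\be(\omv)\to u\be(\om)$ in distributions.

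The main obstacle is the anomalous dissipation $\nu\be''(\omv)|\grad\omv|^2$. For convex $\be$ it is signed, and the spatially integrated identity gives the mass bound $\int_0^T\!\!\int \nu\be''(\omv)|\grad\omv|^2\,dx\,dt \le \|\be(\om_0)\|_{L^1}$, so a direct passage yields only a one-sided distributional inequality $\partial_t\be(\om) + \div(u\be(\om)) \le 0$. To recover equality I would follow the duality scheme of \cite{CrippaSpirito15}: test the equation against $\phi$ solving the backward transport equation $\partial_t\phi + u\cdot\grad\phi = 0$ with smooth terminal datum, which is well-posed by Theorem~\ref{Thm1} and produces a bounded Lagrangian representative. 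Rewriting $\uv\cdot\grad\phi = -\partial_t\phi + (\uv-u)\cdot\grad\phi$, the convective contribution is absorbed into a boundary term and a commutator $\int_0^T\!\!\int \be(\omv)(\uv-u)\cdot\grad\phi$ that vanishes by strong convergence $\uv\to u$; certifying that the viscous remainder---tested against the bounded dual $\phi$---likewise vanishes at the $L^1$-endpoint, where equi-integrability of $\omv$ is no longer automatic, is the crux of the argument. Once renormalization of $\om$ is established, Theorem~\ref{Thm1} delivers the Lagrangian property at no extra cost.
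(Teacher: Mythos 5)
The overall intuition---port the duality scheme of \cite{CrippaSpirito15} to $p=1$ and invoke Theorem~\ref{Thm1} for the linear uniqueness input---is on target, but the execution as written has two genuine gaps, and you explicitly acknowledge one of them without closing it.

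First, the equi-integrability step is not optional and you cannot deduce $\om\in L^\infty((0,T);L^1)$ from the $L^1$-contraction alone. The uniform bound $\sup_\nu\|\omv\|_{L^\infty_tL^1_x}<\infty$ only furnishes weak-$*$ compactness in $L^\infty((0,T);\mathcal M(\R^2))$, and at the endpoint $p=1$ the limit may a priori develop a singular part; if it does, the velocity field does not fall into the class where $\grad u=K\ast\om$ with $\om\in L^1$, and the whole Theorem~\ref{Thm1} toolkit is unavailable (the paper is explicit that the technique ``prevents the applicability \dots to the case when $\om$ is a measure with a non-trivial singular part''). The paper spends the bulk of Lemma~\ref{lem:1} showing tightness and equi-integrability via a superlinear convex profile $G$ and a parabolic truncation argument; you need an argument of this type before $\om\in L^\infty(L^1)$ can be asserted.

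Second, your duality step is built on testing the viscous renormalized equation against the \emph{inviscid} backward Lagrangian dual $\phi$, and that choice cannot be justified: $\phi$ is only a bounded function (it is the pushforward of a bounded terminal datum), so $\grad\phi$ and $\Delta\phi$ are not available, and the commutator $\iint\be(\omv)(\uv-u)\cdot\grad\phi$ as well as the term $\nu\iint\be(\omv)\,\Delta\phi$ cannot be formed. The paper circumvents this entirely by testing the \emph{unrenormalized} Navier--Stokes vorticity equation against the \emph{viscous} backward dual $\phiv$ solving $-\partial_t\phiv-\nu\Delta\phiv-\div(\uv\phiv)=\chi$, which is smooth; this gives the exact identity $\iint\omv\chi=\int\omv_0\,\phiv|_{t=0}$ with no anomalous-dissipation remainder at all, and one then passes to the limit in this duality formula rather than in the PDE. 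So the ``crux'' you flag---controlling the viscous remainder against a merely bounded $\phi$---is not a technical hurdle to be surmounted but a sign that the route itself should be changed.

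Finally, you do not use the one step on which the paper's proof really hinges: it produces a competitor $\bar w$ as a Lagrangian solution of the linear continuity equation with $u=k\ast\om$ (via regularization and stability of Lagrangian flows in the $\grad u=K\ast\omega$ class) and derives a second duality formula $\iint\chi\bar w=\int\om_0\,\phi_2|_{t=0}$; the two Kantorovich potentials $\phi_1,\phi_2$ are then identified by applying \emph{Theorem~\ref{Thm1}'s uniqueness} to their difference in $L^\infty(L^1\cap L^\infty)$, and subtracting the two duality formulas gives $\om=\bar w$, i.e.\ the Lagrangian property directly; renormalization is then immediate. Your plan reverses the logic (renormalization first, then Lagrangian via Corollary~\ref{c2}) and omits the construction of $\bar w$ and the $\phi_1=\phi_2$ comparison, so the argument as sketched does not close.
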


This extends to the borderline case $p=1$ the analysis of~\cite{CrippaSpirito15} for the case $p>1$. 
More details will be given in Theorem \ref{teo:4} below.

The fact that viscosity solutions are Lagrangian solutions shows the equivalence between the Eulerian and the Lagrangian description of fluid dynamics---at least in this physically meaningful approximation: As in the smooth setting, the theorem implies that the vorticity is constant along the flow. Existence result for the 2D Euler equations with non-smooth initial vorticity are proved in \cite{Yudovich63,DPM1,Vecchi-Wu,Delort91}.

We want to point out that Theorem \ref{Thm2} is also relevant in connection with the theory of 2D turbulence. 
The phenomenological theory developed by Kraichnan \cite{Kraichnan67} and Batchelor \cite{Batchelor69} is modelled after Kolmogorov's celebrated ``K41'' theory of 3D turbulence. In analogy to the energy cascade in K41, there is the enstrophy cascade picture at the heart of the KB theory. The enstrophy, which is half the integral of the square of vorticity, is a conserved quantity for 2D ideal fluids described by the Euler equations, and it is dissipated by viscous fluids described by the Navier--Stokes equations. In the cascade picture, the nonlinearity transports enstrophy from large to small scales until it is dissipated by viscosity. A key assumption in turbulence theory is that the enstrophy dissipation rate is bounded away from zero uniformly in the viscosity.

Under certain assumptions, this picture, however, is ruled out by the following argument.
%rigorous mathematical analysis:
% To be more specific, we consider the
% vorticity formulation of the 2D Navier--Stokes equations, that is,
%\begin{equation}
%\label{i1}
%\left\{\begin{array}{rll}
% \partial_t \omega_{\nu} + u_{\nu}\cdot \grad\omega_{\nu} - \nu\laplace \omega_{\nu} &\!\!\!= 0&\mbox{in }(0,T)\times \R^2, \\ 
% u_{\nu}&\!\!\!= K\ast \omega_{\nu} &\mbox{in }(0,T)\times \R^2, \\ \omega_{\nu} (0,\cdot) &\!\!\!=  \bar \omega &\mbox{in }\R^2, \end{array}\right.
%\end{equation}
%where $K$ is the 2D Biot--Savart kernel, i.e.,
%\begin{equation}
%\label{i2}
%K(z) = \frac1{2\pi}\frac{z^{\perp}}{|z|^2}.
%\end{equation}
%Hence $\omega_{\nu}$ is the vorticity that corresponds to the divergence-free velocity field $u_{\nu}$, i.e., $\omega_{\nu} = \curl u_{\nu}$.  
It is easily checked that the Navier--Stokes equations dissipate the enstrophy  $\frac12\|\omega^{\nu}(t)\|_{L^2}^2$ at the  rate $\nu \|\grad \omega^{\nu}(t)\|_{L^2}^2$. If the latter was bounded away from zero by a positive constant $C$, then
\[
	\|\omega^{\nu}(t)\|_{L^2}^2 + Ct \le \| \omega_0\|_{L^2}^2,
\]
for any $t>0$. In order to perform the limit $\nu\to 0$, it remains to invoke a standard compactness argument. We find a  function $\omega$ in $L^{\infty}(L^2)$ which satisfies the 
Euler equations in vorticity form
%, i.e.,
%\begin{equation}
%\label{19}
%\left\{\begin{array}{rll}
% \partial_t \omega + u\cdot \grad\omega  &\!\!\!= 0&\mbox{in }(0,T)\times \R^2, \\ 
% u&\!\!\!= K\ast \omega &\mbox{in }(0,T)\times \R^2, \\ \omega (0,\cdot) &\!\!\!=  \bar \omega &\mbox{in }\R^2, \end{array}\right.
%\end{equation}
and such that  $\|\omega(t)\|_{L^2} < \| \omega_0\|_{L^2}$ for any positive $t$. That means that the limiting Euler equations do no preserve enstrophy. This, however, contradicts the DiPerna--Lions theory of renormalized solutions \cite{DiPernaLions89}. Indeed, because  $ \|\grad u\|_{L^2}= \|\omega\|_{L^2} $, the advecting velocity field is in the DiPerna--Lions class, and thus $\omega$ is a renormalized solution,
which entails \eqref{i3} with $\rho=\omega$ and $q=2$.

It is natural to ask if such dissipation is in fact present under more general assumptions. For a given Banach space $X$, the questions are thus the following: \emph{Given an initial datum in $X$, is there a viscosity solution to the Euler vorticity equation?} And, if yes: \emph{Is that viscosity solution a renormalized solution?}  These questions are mathematically interesting independently from their fluid dynamical background. 

Among some other spaces, this questions were studied for $L^p$ spaces in \cite{FilhoMazzucatoNussenzveig06} ($p\ge 2$) and   \cite{CrippaSpirito15} ($1<p<2$), and in either case both questions (when applicable) are answered positively. Notice that for $p<4/3$, a priori estimates available for $u$ and $\omega$ are not enough to guarantee that the nonlinear term $u\omega$ is in $L^1$. For this reason, in order to make sense of \eqref{CE}, solutions to the Euler equation are defined as  renormalized solutions and the second question is redundant. The arguments in \cite{FilhoMazzucatoNussenzveig06} and \cite{CrippaSpirito15} are hinged on the fact that Calder\'on--Zygmund theory for the Biot--Savart kernel (given implicitly in  \eqref{i4}) yields $\|\grad u\|_{L^p} \lesssim \|\omega\|_{L^p}$ precisely if $p\in(1,\infty)$. In the borderline case $p=1$,
where this estimate fails, $u$ does not have Sobolev regularity and therefore DiPerna--Lions theory is not applicable. See however \cite{Vecchi-Wu}. Notice that the other borderline case $p=\infty$ is on the contrary well-behaved \cite{Yudovich63}: in fact, even uniqueness for the nonlinear problem can be proven. 
Our   Theorem~\ref{Thm2} extends the results from \cite{FilhoMazzucatoNussenzveig06} and \cite{CrippaSpirito15} to the case $p=1$. We build up on the linear theory established in Theorem~\ref{Thm1} and closely follow the argumentation developed in \cite{CrippaSpirito15}. 

The paper is organized as follows: In Section \ref{sec:pre} we recall some basic definitions of solutions to linear continuity equations, some auxiliary results from harmonic analysis  and  interpolation and embedding estimates for weak Lebesgue spaces. Section \ref{sec:ot} contains some preliminaries on optimal transportation distances for specific choices of concave cost functions. In Section \ref{Uni} we prove our uniqueness result for linear continuity equations when the velocity field is a singular integral of an $L^1$ function. The final Section \ref{sec:euler} is devoted to the analysis of vanishing viscosity solutions for the 2D Euler equations.
Throughout the paper we will use the short notation $a\lesssim b$ whenever $a\le C b$ for some constant $C$ depending only on the space dimension $n$ and on other quantities that we do not specify as they do not play any role in the estimates.				
%
%If $u$ denotes the velocity field and $\rho_0$ the initial configuration, then the Cauchy problem reads
%
%On the initial data we assume 
%\begin{equation}\label{initial-data}
%	\rho_0\in L^{\infty}\cap L^{1}(\R^n) 
%\end{equation}
%and throughout the paper we assume our density has the following regularity
%\begin{equation}\label{extra-on-rho}
%	\rho\in L^{\infty}((0,T); L^{\infty}\cap L^1(\R^n))
%\end{equation}
%The Lagrangian formulation of the same problem reads
%\begin{equation}\label{CE2}
%	\begin{array}{lll}
%		\frac{dX}{dt}(t,x) & = & u(t, X(t,x)) \quad t\in (0,T) \\
%		X(0,x)             & = & x\,.                          
%												    
%	\end{array}
%\end{equation}
%\begin{theorem}\label{main-th1}
%\end{theorem}
%\begin{theorem}\label{main-th2}
%\end{theorem}
%%
\section{Linear continuity equations and singular integrals}\label{sec:pre}
%{ \bl {\bf Where do we put this assumptions? In the Intro or here?}
%
% On the initial data we assume 
%\begin{equation}\label{initial-data}
%	\rho_0\in L^{\infty}\cap L^{1}(\R^n) 
%\end{equation}
%and throughout the paper we assume our density has the following regularity
%\begin{equation}\label{extra-on-rho}
%	\rho\in L^{\infty}((0,T); L^{\infty}\cap L^1(\R^n))
%\end{equation}
%}

The present section is divided into three subsections: In the first one, we recall the definitions of distributional, Lagrangian and renormalized solutions to the continuity equation \eqref{CE} under quite general assumptions. In the second subsection, we specify the assumptions on the velocity field and the singular integral kernel, and collect a number of technical results that were previously established in \cite{BouchutCrippa13}. In the last subsection we summarize some inequalities involving weighted Lebesgue spaces that we will need in the following.

\subsection{Distributional, renormalized and Lagrangian solutions to linear continuity equations}
%In this subsection we recall the main definitions and results concerning the linear continuity equations with velocity fields satisfying \eqref{u-1}, \eqref{u-2} and \eqref{u}. 
We start by recalling the usual definition of distributional solutions. 
\begin{definition}[Distributional solutions]\label{def:ds}
	Let $u\in L^{1}((0,T);L_{loc}^{p}(\R^{n}))$ and $\rho_0\in L^q_{loc}(\R^n)$ be given for some $q$ such that $1/p+1/q\leq1$. A function $\rho$ is called a distributional solution of \eqref{CE} if  $\rho\in L^{\infty}((0,T);L_{loc}^{q}(\R^{n}))$  and
	\begin{equation*}
		\iint \rho(\partial_t\phi+u \cdot \nabla\phi)\,dxdt + \int\rho_{0}\phi|_{t=0}\,dx= 0,
	\end{equation*}
	for any $\phi\in C^{\infty}_{c}([0,T)\times\R^{n})$. 
	\end{definition}
							
	Whenever the velocity's divergence is bounded from below, distributional solutions in the sense of the previous definition can be obtained by smooth approximation. This standard argument is performed, for instance, in \cite[Propositon II.1]{DiPernaLions89}.
							
	In the context of linear transport and continuity equations,  DiPerna and Lions \cite{DiPernaLions89} introduced the concept of renormalized solutions.
	\begin{definition}[Renormalized solutions]\label{def:rs}
		Let $u\in L^{1}((0,T);L^{1}_{loc}(\R^{n}))$ be given  with $\div u\in L^{1}((0,T);L^1_{loc}(\R^{n}))$ and $\rho_0\in L^0(\R^n)$. Then,  $\rho\in L^{\infty}([0,T);L^{0}(\R^{n}))$  is a renormalized solution of \eqref{CE} if for any $\beta\in C^1(\R)\cap L^{\infty}(\R)$, 
			$\beta$ vanishing in a neighborhood of $0$ and $|\beta'(s) s|$ bounded,  it holds
			\begin{equation*}
				\iint \beta(\rho)(\partial_t\phi+u\cdot\nabla\phi)+(\div u)\big(\beta'(\rho)\rho-\beta(\rho)\big)\phi\,dxdt + 
				\int \beta(\rho_{0})\phi|_{t=0}\,dx=0,
			\end{equation*}
			for any $\phi\in C^{\infty}_{c}([0,T)\times\R^{n})$. 
			\end{definition}
															
			Note that the definition of renormalized solution makes sense even when it is not possible to define distributional solutions, e.g., if $\rho u\notin L^{1}_{loc}$. In fact, under the hypotheses on $\rho$ and $\beta$, it holds that $\beta(\rho)$ and $\rho\beta'(\rho)$ are both in $L^{\infty}(L^1\cap L^{\infty})$. Moreover, if $\rho$ and $u$ are as in Definition \ref{def:ds} above, an approximation argument shows that renormalized solutions are in fact distributional solution, cf.~\cite[Theorem II.3]{DiPernaLions89}.
															
			Before defining Lagrangian solutions we first need to introduce regular Lagrangian flows.
			\begin{definition}[Regular Lagrangian flows]\label{def:rlf}
				Let $u\in L^{1}((0,T);L^{1}_{loc}(\R^n))$ be given. We say that $X: (0,T)\times\R^n\to\R^n$ is a regular Lagrangian flow associated to $u$ if 
				\begin{enumerate}
					\item[(1)] for a.e.~$x\in \R^n$ the map $t\mapsto X(t,x)$ is an absolutely continuous integral solution of the ordinary differential equation  $\frac{d}{dt}X(t,x)=u(t,X(t,x))$ for $t\in(0,T)$ with $X(0,x)=x$;
					\item[(2)] there exists a constant $L$, called compressibility constant, independent of $t$ such that 
					      \begin{equation}\label{eq:compress}
					      	\mathcal{L}^{n}(B)\leq L\mathcal{L}^n(\{x\in \R^n:\,X(t,x)\in B\})
					      \end{equation} 
					      for any Borel set $B\subset\R^n$. 
				\end{enumerate}
			\end{definition}
																	
			For a given regular Lagrangian flow, we furthermore define the corresponding Jacobian determinant $JX$ by $JX(t,x) := \det(\nabla_x X(t,x))$.	We will call a regular Lagrangian flow \emph{invertible} if 
			$X(t, \cdot)$ is a.e.~invertible for any $t\in (0,T)$. In this case we  denote by $X^{-1}(t,\cdot)$ its inverse map. 				
			Then the definition of  Lagrangian solutions of 
			\eqref{CE} is the following:
															 
			\begin{definition}[Lagrangian solutions]\label{def:ls}
				Let $\rho_0\in L^0(\R^n)$ be given. A function $\rho$ is called a Lagrangian solution of \eqref{CE} if  $ \rho\in L^{\infty}((0,T);L^{0}(\R^n))$  and there exists an invertible regular Lagrangian flow $X$ associated to $u$ such that
				\begin{equation*}
					\rho(t,x)=\frac{\rho_0(X^{-1}(t,x))}{JX(t, X^{-1}(t,x))}
				\end{equation*}
				for all $t\in(0,T)$ and a.e.~$x\in\R^n$.					
			\end{definition}
																							
			Notice that Lagrangian solutions are just those solutions that are obtained in the smooth setting via the method of characteristics. We can more compactly write $\rho(t,\cdot) = X(t,\cdot)_{\#}\rho_0$, where $\#$ denotes the pushforward operator.												
																		 
			\subsection{Velocity fields whose gradient is given by a singular integral}
							
			In this subsection we collect some harmonic analysis tools for singular integrals defined by
			\[
				S \omega : = K\ast \omega
			\]
			for sufficiently fast decaying functions $\omega$. We focus on  integral kernels $K: \R^n\setminus\{0\}\to \R$ which satisfy the following properties:
			\begin{itemize}
				\item[K1)] $K\in\mathcal{S}'(\R^{n})$ and $\widehat{K}\in L^{\infty}(\R^n)$, where $\widehat K$ denotes the Fourier transform of $K$;
				\item[K2)] $K|_{\R^n\setminus \{0\}}\in C^1(\R^n\setminus \{0\})$;
				\item[K3)] there exists a constant $C\geq 0$ such that 
				      $$|K(x)|\leq \frac{C}{|x|^n} \qquad \mbox{ for every } x\neq 0\,;$$
				\item[K4)] there exists a constant $C\geq 0$ such that 
				      $$|\nabla K(x)|\leq \frac{C}{|x|^{n+1}} \qquad \mbox{ for every } x\neq 0\,;$$
				\item[K5)] there exists a constant $C\geq 0$ such that 
				      $$\left|\int_{R_1<|x|<R_2}K(x)\, dx \right|\leq C \qquad \mbox{ for every } 0<R_1<R_2<\infty\,.$$
			\end{itemize}
			Typical examples of admissible kernels are first order  derivatives of the two or three-dimensional Biot-Savart kernels, or, more general, second order derivatives of  Newtonian potentials. For a comprehensive theory of singular integrals we refer to \cite{Stein70}.
						
			By standard Calder\'on--Zygmund theory, $S$ extends to a continuous operator on $L^p$ 
			as long as $p\in (1,\infty)$, and continuity fails if $p=1$. Instead, one has the weak estimate		
			\begin{equation}\label{eq:s3}
			\|S\omega\|_{L^{1,\infty}}\lesssim \|\omega\|_{L^1}.
			\end{equation}
			Recall that, for arbitrary $p$, the space $L^{p,\infty}$ denotes the weak $L^p$ space (or Lorentz space), which is associated to the quasi-norm
			\begin{equation}\label{eq:defm}
				\|f\|_{L^{p,\infty}}^p=\sup_{\lambda>0}\Big\{ \lambda^p\mathcal{L}^{n}\left(\left\{x\in \R^n:|f(x)|>\lambda\right\}\right)\Big\},
			\end{equation}
			for every measurable function $f$ on $\R^n$. Observe that the quantity $\|\cdot\|_{L^{p,\infty}}$ is not a norm, because it lacks the triangular inequality. We also recall that the embedding $L^{p}\subset L^{p,\infty}$ holds with $\|f\|_{L^{p,\infty}}\leq \|f\|_{L^p}$ and that the inclusion is strict for any $p<\infty$. We also adopt the standard convention that $L^{\infty,\infty}=L^{\infty}$.
						
			A central tool in classical Calder\'on--Zygmund theory is the maximal operator $M$, defined by
			\[
				M(f)(x)=\sup_{\e>0}\frac{1}{\mathcal{L}^n(B_{\e}(x))}\int_{B_{\e}(x)}|f(y)|\, dy.
			\]
			This operator is itself continuous from $L^p$ to $L^p$ provided that $1< p \leq \infty$. Again,  continuity ceases to hold at $p=1$. Instead, in analogy to \eqref{eq:s3} one has 
			\[
				\| M ( f)\|_{L^{1,\infty}} \lesssim  \|f\|_{L^1}.
			\]
			Although this weak bound holds for the maximal function and for the singular operator (see \eqref{eq:s3}) separately, we cannot hope that the same bound holds for the composition $M\circ S$. Such an estimate, however, would be essential for an adaptation of the method introduced in \cite{CrippaDeLellis08} (and translated to the PDE setting in \cite{Seis16a}). Indeed, one of the key estimates in \cite{CrippaDeLellis08} is the control  of difference quotients by gradients. In a first step the authors use the fact that difference quotients are bounded by maximal functions,
			\[
				\frac{|u(x)-u(y)|}{|x-y|}\lesssim M(\grad u)(x) + M(\grad u)(y),
			\]
			for a.e.~$x,y$. This estimate is rather elementary and belongs to the class of Morrey estimates; its proof is essentially contained in \cite[pp.~143-144]{EvansGariepy92}. In the second step the authors apply the continuity estimate for maximal function operators, which is suitable only if $p>1$. For gradients of the form $S\omega$ with merely integrable $\omega $, this strategy needs some modifications.									
			As in  \cite{BouchutCrippa13}, we will consider the following smooth variant of the maximal function:
			\[
				M_{\sigma}(f)(x):=\sup_{\varepsilon>0}\left|\frac{1}{\varepsilon^n}\int_{\R^n}\sigma\left(\frac{x-y}{\varepsilon}\right)f(y)\, dy\right|,
			\] 
			where $\sigma\in C_c^{\infty}(\R^n)$. Notice that  the difference with  the classical maximal function is not only the smooth cut-off, but also that the modulus is taken only after the computation of the (smooth) average. It is proved in \cite{BouchutCrippa13}  that for appropriate convolution kernels $\sigma$ the compositions of $S$ with  these smooth maximal functions do satisfy the estimate
			\begin{equation}\label{lem:max}
				||M_{\sigma}(S\omega)||_{L^{1,\infty}(\R^n)}\lesssim ||\omega||_{L^1(\R^n)},
			\end{equation}
			see \cite[Theorem 3.3]{BouchutCrippa13}. Regarding the Morrey-type estimate, it is proven in \cite{BouchutCrippa13} that if $\omega\in L^1(L^1)= L^1((0,T)\times \R^n)$ then there exists a function $G$ on $(0,T)\times \R^n$ and for a.e. $t$ a set $N_t$ with $\mathcal{L}^n(N_t)=0$ such that 
			\begin{equation}
				\label{10}
				\frac{|u(t,x)-u(t,y)|}{|x-y|}\lesssim G(t,x)+G(t,y)\, \qquad \forall x,y\not\in N_t\,.
			\end{equation}	
			For every $\eps>0$, this function can be furthermore decomposed into the sum $G^1_{\eps}+G^2_{\eps}$:
			\begin{equation}
				\label{11}
				\|G^1_{\eps}\|_{L^1(L^{1,\infty})} \le \eps,\quad \|G^2_{\eps}\|_{L^1(L^2)} \le C_{\eps},
			\end{equation}
			where $C_{\eps}$ depends, besides on $\eps$, also on the equi-integrability of $\omega$. This in particular prevents the applicability of this technique to the case when $\omega$ is a measure with a non-trivial singular part.
			%
			%	Moreover, in \cite{BouchutCrippa13}, Lemma \ref{lem:diffincr} and Lemma \ref{lem:max} are used in combination with a decomposition depending on the equi-integrability of the function $g$. We start by recalling that 
			%				given $\eps$, an arbitrary positive number, it is easy to prove that $g\in L^1(\R^n)$ can be decomposed into the sum $g^{(1)} + g^{(2)}$ with
			%				\begin{equation}\label{4}
			%					\|\ga\|_{L^1(\R^n)}\leq \varepsilon \quad\mbox{ and } \quad \|\gb\|_{L^{\infty}(\R^n)}\leq c_{\varepsilon}\,
			%				\end{equation}
			%				for some constant $c_{\eps}<\infty$. Notice that, by interpolation between $L^1$ and $L^{\infty}$, we also have 
			%				$$\|\gb\|_{L^2(\R^n)}\leq C_{\varepsilon}\,.$$
			%				It is convenient to shorten the notation by setting
			%				$$G(x):=M_{\sigma}(Sg)(x)\,.$$
			%				Noticing that $M_{\sigma}(Sg)$ is a sub-linear operator and by using Lemma \ref{lem:max} we have 
			%				\begin{lemma}
			%					For any $\e>0$, given $g_{1}$ and $g_{2}$ as in \eqref{4}, the function $G$ can be decomposed as $G=G^{1}+G^{2}$, with $G^{1}$ and $G^{2}$ such that 
			%					\begin{align}
			%						\label{Uga}\|G^{(1)}\|_{L^{1,\infty}(\R^n)} & \leq C_1\|g^{(1)}\|_{L^1(\R^n)}, \\
			%						\label{Ugb}\|G^{(2)}\|_{L^2(\R^n)}          & \leq C_1\|g^{(2)}\|_{L^2(\R^n)}. 
			%					\end{align}
			%				\end{lemma}
			%			
			\subsection{Some inequalities}
						
			We conclude this section with auxiliary embedding and interpolation inequalities on a finite measure space $(X,\mu)$. We will later need such inequalities in the specific case of measures of the form $d\mu(t,x) = \chi_{(0,T)}(t)|\rho(t,x)| d\L^1\otimes d \L^n$ and similar. For this purpose we define $L^p$ and weak-$L^p$ norms on $(X,\mu)$ by
			$$\|f\|^p_{L^p(\mu)}=\int_{X}|f|^p\, d\mu $$
			and $$\|f\|^p_{L^{p,\infty}(\mu)}=\sup_{\lambda>0}\Big\{\lambda^p\mu\left(\left\{x\in X :  |f(x)|>\lambda\right\}\right)\Big\},$$
			respectively.
													
			\begin{lemma}
				For $1 \leq r < p$ it holds that
				\begin{equation}\label{weak-embed}
					\|f\|^r_{L^r(\mu)}\leq \frac{p}{p-r}\mu(X)^{1-\frac rp}\|f\|^r_{L^{p,\infty}(\mu)}\,.
				\end{equation}
				%					In particular, we have the following embedding
				%					\begin{equation}
				%						(L^p(X,\mu)\subseteq) L^{p,\infty}(X,\mu)\subseteq L^q(X,\mu)\,,
				%					\end{equation}
				%					where $X$ is any space with finite measure.
			\end{lemma}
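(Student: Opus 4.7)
The plan is to use the layer-cake (distribution function) representation of the $L^r(\mu)$ norm together with two complementary upper bounds on the super-level sets of $|f|$.

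First I would write
\begin{equation*}
\|f\|_{L^r(\mu)}^r = r\int_0^\infty \lambda^{r-1} \mu(\{|f|>\lambda\})\, d\lambda,
\end{equation*}
which is valid for any measurable $f$ on the finite measure space $(X,\mu)$. The two elementary bounds on the distribution function are the trivial one $\mu(\{|f|>\lambda\}) \le \mu(X)$ and the one coming from the very definition of the weak quasi-norm, namely $\mu(\{|f|>\lambda\}) \le \lambda^{-p}\|f\|_{L^{p,\infty}(\mu)}^p$. The first bound is sharp when $\lambda$ is small (so that the super-level set occupies most of $X$), while the second becomes effective for large $\lambda$. I would therefore split the integral at the threshold $\lambda_0$ where the two bounds coincide, namely
\begin{equation*}
\lambda_0 \ldef \|f\|_{L^{p,\infty}(\mu)}\,\mu(X)^{-1/p}.
\end{equation*}

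Using the trivial bound on $(0,\lambda_0)$ and the weak bound on $(\lambda_0,\infty)$ gives
\begin{equation*}
\|f\|_{L^r(\mu)}^r \le r\mu(X)\int_0^{\lambda_0}\lambda^{r-1}\,d\lambda + r\|f\|_{L^{p,\infty}(\mu)}^p\int_{\lambda_0}^\infty \lambda^{r-1-p}\,d\lambda.
\end{equation*}
The condition $r<p$ is exactly what makes the second integral converge; this is the only place where the assumption is used. A direct computation yields $\mu(X)\lambda_0^r$ for the first term and $\frac{r}{p-r}\|f\|_{L^{p,\infty}(\mu)}^p\lambda_0^{r-p}$ for the second. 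Substituting the value of $\lambda_0$, both summands collapse to multiples of $\mu(X)^{1-r/p}\|f\|_{L^{p,\infty}(\mu)}^r$, and adding the coefficients $1$ and $r/(p-r)$ produces the factor $p/(p-r)$ claimed in \eqref{weak-embed}.

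There is no real obstacle here; the only choice to think about is the splitting point, which is forced by requiring the two bounds to match. It is perhaps worth noting that the argument does not require $r\ge 1$: the proof works for any $0<r<p$, although the lemma is only stated for $r\ge 1$ because the left-hand side is a norm only in that range.
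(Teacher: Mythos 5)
Your proof is correct and coincides with the paper's argument: both use the layer-cake formula, bound the distribution function by $\mu(X)$ below a threshold and by $\lambda^{-p}\|f\|_{L^{p,\infty}(\mu)}^p$ above it, and choose the threshold $\lambda_0=\mu(X)^{-1/p}\|f\|_{L^{p,\infty}(\mu)}$ (you by equalizing the two bounds, the paper by optimizing the resulting sum over $\alpha$, which gives the same value). Your closing remark that only $0<r<p$ is needed is a fair observation and is consistent with the computation.
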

								
			\begin{proof}
				Let us rewrite the $L^r$ norm of $f$ in terms of the measure of its superlevel sets. Denoting $m(\lambda) = \mu\left(\left\{x\in X:\: |f(x)|\ge \lambda\right\}\right)$, we have
				\begin{align*}
					\|f\|_{L^r(\mu)}^r=\int_0^{\infty}r\lambda^{r-1}m(\lambda)\, d\lambda =\int_0^{\alpha}r\lambda^{r-1}m(\lambda)\, d\lambda+\int_{\alpha}^{+\infty}r\lambda^{r-1}m(\lambda)\, d\lambda\,, 
				\end{align*}
				where $\alpha$ is a positive number that we will choose later. 
														
				The first term is trivially estimated as follows:						$$\int_0^{\alpha}r\lambda^{r-1}m(\lambda) \, d\lambda\leq \mu(X)\alpha^r\,.$$
				We turn to the estimate of the second term. Using the inequality $\lambda^p m(\lambda) \le  \|f\|_{L^{p,\infty}(\mu)}^p$, we find
				\[			\int_{\alpha}^{+\infty}r\lambda^{r-1}m(\lambda)\, d\lambda\leq
					\frac{r}{p-r}\|f\|_{L^{p,\infty}(\mu)}^p\alpha^{r-p}\,.
				\]
				Therefore putting all together we have 
				$$\|f\|_{L^r(\mu)}^r\leq \mu(X)\alpha^r+\frac{r}{p-r}\|f\|_{L^{p,\infty}(\mu)}^p\alpha^{r-p}\,.$$
				Optimizing the right-hand side with respect to  $\alpha$ we find 
				$\alpha = \mu(X)^{-\frac1p}\|f\|_{L^{p,\infty}(\mu)}  $, and thus
				\[
					\|f\|_{L^r(\mu)}^r \le \frac{p}{p-r} \mu(X)^{1-\frac{r}p} \|f\|_{L^{p,\infty}(\mu)}^r.
				\]
				This is the desired inequality.
			\end{proof}	
						
			The following interpolation inequality is a variant of \cite[Lemma 2.2]{BouchutCrippa13}.
						
			\begin{lemma}\label{L10}
				For any $1< p<\infty$ it holds that
				\begin{equation}\label{interpol}
					\|f\|_{L^1(\mu)}\leq \frac{p}{p-1}\|f\|_{L^{1,\infty}(\mu)}\left[1+\log\left(\frac{\mu( X)^{1-\frac{1}{p}}\|f\|_{L^{p,\infty}(\mu)}}{\|f\|_{L^{1,\infty}(\mu)} }\right)\right]\,.
				\end{equation}
			\end{lemma}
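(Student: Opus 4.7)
I would prove this by the layer-cake (distribution function) method, splitting the integration in $\lambda$ at two carefully chosen thresholds and applying a different pointwise bound on $m(\lambda):=\mu(\{|f|>\lambda\})$ in each piece. This mirrors the strategy of the previous lemma, but with three regions rather than two: near $\lambda=0$ we control $m$ by the total mass; in an intermediate window we use the $L^{1,\infty}$ bound, which produces the logarithm; and in the tail we use the $L^{p,\infty}$ bound to get integrable decay.

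More concretely, I would start from
\[
\|f\|_{L^1(\mu)}=\int_0^{\infty} m(\lambda)\,d\lambda = \int_0^{\beta}m(\lambda)\,d\lambda + \int_{\beta}^{\alpha}m(\lambda)\,d\lambda + \int_{\alpha}^{\infty}m(\lambda)\,d\lambda,
\]
for thresholds $0<\beta<\alpha$ to be chosen. On $[0,\beta]$ use $m(\lambda)\le \mu(X)$, on $[\beta,\alpha]$ use $m(\lambda)\le \|f\|_{L^{1,\infty}(\mu)}/\lambda$, and on $[\alpha,\infty)$ use $m(\lambda)\le \|f\|_{L^{p,\infty}(\mu)}^p/\lambda^p$. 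This yields
\[
\|f\|_{L^1(\mu)}\le \beta\mu(X) + \|f\|_{L^{1,\infty}(\mu)}\log\!\frac{\alpha}{\beta} + \frac{\|f\|_{L^{p,\infty}(\mu)}^p}{(p-1)\,\alpha^{p-1}}.
\]
Next I would pick $\beta=\|f\|_{L^{1,\infty}(\mu)}/\mu(X)$ (so the first term equals $\|f\|_{L^{1,\infty}(\mu)}$) and minimize the remainder in $\alpha$; the stationarity condition gives the choice $\alpha = \|f\|_{L^{p,\infty}(\mu)}^{p/(p-1)}/\|f\|_{L^{1,\infty}(\mu)}^{1/(p-1)}$, which makes the tail term collapse to $\|f\|_{L^{1,\infty}(\mu)}/(p-1)$.

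Plugging in and collecting, the bound becomes
\[
\|f\|_{L^1(\mu)}\le \|f\|_{L^{1,\infty}(\mu)}\Bigl[\tfrac{p}{p-1} + \log(\alpha\mu(X)/\|f\|_{L^{1,\infty}(\mu)})\Bigr].
\]
A short algebraic check reveals that with the above $\alpha$,
\[
\frac{\alpha\mu(X)}{\|f\|_{L^{1,\infty}(\mu)}} = \left(\frac{\mu(X)^{1-1/p}\|f\|_{L^{p,\infty}(\mu)}}{\|f\|_{L^{1,\infty}(\mu)}}\right)^{p/(p-1)}\!,
\]
so the logarithm produces exactly the factor $\frac{p}{p-1}\log(\,\cdot\,)$ that, combined with the prefactor $\frac{p}{p-1}$ in front of the $1$, yields the claimed inequality after pulling out $\frac{p}{p-1}$.

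The only nontrivial point is checking consistency of the thresholds, i.e.~that $\beta\le \alpha$ (so that the middle interval is nonempty and the argument of the logarithm is $\ge 1$). Equivalently this is the elementary interpolation $\|f\|_{L^{1,\infty}(\mu)}\le \mu(X)^{1-1/p}\|f\|_{L^{p,\infty}(\mu)}$, which follows from the bound $\lambda m(\lambda)\le \lambda m(\lambda)^{1/p}\mu(X)^{1-1/p}$ taken together with the definition \eqref{eq:defm}. Once this is in place the estimate is established; the case where the right-hand side of the desired inequality is trivially infinite (or where one of the norms vanishes) can be dispatched separately. I expect no essential obstacle beyond bookkeeping of the exponents, since all the analytic content is already encoded in the two weak-type bounds.
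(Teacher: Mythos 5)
Your proof is correct and follows essentially the same route as the paper's: the same three-region layer-cake split with the same two threshold values, the only cosmetic difference being that you label the small threshold $\beta$ and the large one $\alpha$ (the paper does the opposite) and that you motivate the choice of the upper threshold by optimizing, whereas the paper simply states it. The consistency check $\beta\le\alpha$ you mention is also the same verification the paper performs.
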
									
													
			\begin{proof}
				We start again by writing the $L^1$ norm of $f$ in terms of its level sets. Setting as above
				$m(\lambda)=\mu(\{x\in  X: |f(x)|>\lambda\})$, we have
				$$\|f\|_{L^1(\mu)}=\int_0^{\alpha}m(\lambda)\, d\lambda\,+\int_{\alpha}^{\beta}m(\lambda)\, d\lambda\,+\int_{\beta}^{\infty}m(\lambda)\, d\lambda\,,$$
				where 
				\begin{equation}
					\alpha=\frac{\|f\|_{L^{1,\infty}(\mu)}}{\mu( X)}  \qquad \mbox{and}\qquad \beta=\left(\frac{\|f\|^p_{L^{p,\infty}(\mu)}}{\|f\|_{L^{1,\infty}(\mu)}}\right)^{\frac {1}{p-1}}\,.
				\end{equation}
				The choice of $\alpha$ and $\beta$ is admissible in the sense that $\alpha \le \beta$. Indeed, because $m(\lambda)\le \mu( X)$, it holds that
				%					\begin{eqnarray*}
				%						\lambda \mu(\{|f|>\lambda\})
				%						&\leq& \lambda \mu(\{|f|>\lambda\})^{\frac{1}{p}}\mu(\{|f|>\lambda\})^{1-\frac{1}{p}}\\
				%						&\leq& \lambda \mu(\{|f|>\lambda\})^{\frac{1}{p}}\mu( X)^{1-\frac{1}{p}}\\
				%						&\leq& \lambda \mu(\{|f|>\lambda\})^{\frac{1}{p}}\|\rho\|_{L^1}^{1-\frac{1}{p}}\,,
				%					\end{eqnarray*}
				$$\|f\|_{L^{1,\infty}(\mu)}\leq\mu( X)^{1-\frac{1}{p}} \|f\|_{L^{p,\infty}(\mu)}\, $$
				which is equivalent to $\alpha\leq\beta$. 
								
				Using the trivial bound $	m(\lambda)\leq \mu( X)$ again,  we see that		
				$$\int_0^{\alpha}m(\lambda)\, d\lambda\, \leq \alpha\mu( X) = \|f\|_{L^{1,\infty}(\mu)}.$$
				On the one hand, from the estimate
				$\lambda m(\lambda)\leq  \|f\|_{L^{1,\infty}(\mu)}$, we deduce that
				$$\int_{\alpha}^{\beta}m(\lambda)\, d\lambda\,\leq \|f\|_{L^{1,\infty}(\mu)} \log\left(\frac{\beta}{\alpha}\right) =\|f\|_{L^{1,\infty}(\mu)}\log\left(\mu( X)\left(\frac{\|f\|_{L^{p,\infty}(\mu)}}{\|f\|_{L^{1,\infty}(\mu)}}\right)^{\frac{p}{p-1}}\right).$$
				On the other hand, from the estimate $\lambda^p m(\lambda)\leq  \|f\|_{L^{p,\infty}(\mu)}^p$, we have
				$$\int_{\beta}^{\infty}m(\lambda)\, d\lambda\,\leq \frac1{p-1}\|f\|_{L^{p,\infty}(\mu)}^p \beta^{1-p}=\frac{1}{p-1}\|f\|_{L^{1,\infty}(\mu)}\,.$$
				A combination of the previous estimates yields the statement of the lemma.
			\end{proof}
																			
			\section{Optimal transportation with logarithmic cost functions}\label{sec:ot}
										
			In this section, we briefly review some tools from the theory of optimal transportation that will become relevant in our subsequent analysis. For a comprehensive introduction into the topic, we refer to \cite{Villani03}.
										
			We consider two non-negative distributions $\rho_1$ and $\rho_2$ on $\R^n$ with the same total mass
			\begin{equation}\label{total-mass}
				\int \rho_1\, dx=\int \rho_2 \, dx <\infty ,
			\end{equation}
			and denote by $\Pi(\rho_1,\rho_2)$ the set of the corresponding transport plans. Namely, $\pi \in \Pi(\rho_1,\rho_2)$ is a measure on the product space $\R^n\times \R^n$ with marginals $\rho_1$ and $\rho_2$, i.e.,
			\begin{equation*}
				\pi[A\times \R^n]=\int_A \rho_1 \, dx\,,\quad \pi[\R^n\times A]=\int_A\rho_2\, dy\,,
			\end{equation*}
			for all measurable sets $A\subset \R^n$,
			or equivalently
			\begin{equation}\label{marginal}
				\iint (f_1(x)+f_2(y)) d\pi(x,y)=\int f_1\rho_1\, dx+\int f_2\rho_2 \, dy\,,
			\end{equation}
			for all functions $f_1$ in $L^1(\rho_1\, dx)$ and $f_2$ in $L^1(\rho_2\, dx)$.  For a given cost function $c$ on $\R_+$ the minimal transportation cost is defined as
			\begin{equation}\label{transp-cost}
				\D_c(\rho_1,\rho_2)=\inf_{\pi\in \Pi(\rho_1,\rho_2)}\iint c(|x-y|)d\pi(x,y) .
			\end{equation}
			Informally speaking, $\D_c(\rho_1,\rho_2)$ measures the minimal total cost for transferring one configuration $\rho_1$ (e.g., a pile of sand) into another configuration $\rho_2$ (e.g., a hole), if the cost for the transport of a single item over the distance $z$ is given by $c(z)$.			
										
			In this paper we will only consider strictly concave cost functions. Notice that strictly concave cost functions naturally induce a metric on $\R^n$, given by $d(x,y) = c(|x-y|)$. In this case, \eqref{transp-cost} admits the dual formulation  
			\begin{equation}\label{var}
				\D_c(\rho_1,\rho_2)=\sup_{\zeta}\left\{\int \zeta(\rho_1-\rho_2)\, dx : |\zeta(x)-\zeta(y)|\leq d(x,y)\right\}\,.
			\end{equation}
			This identity is a variant of the classical Kantorovich duality of optimal transportation and is usually referred to as the  Kantorovich--Rubinstein theorem, cf.~\cite[Theorem 1.14]{Villani03}. 
			The theorem has an immediate consequence: $\D_c(\rho_1,\rho_2)$ is a transshipment cost that depends only on the difference of $\rho_1$ and $\rho_2$. In particular it extends to densities that are not necessarily nonnegative but satisfy \eqref{total-mass}. Moreover, $\D_c$ defines a metric on the space of densities with the same total mass, cf.~\cite[Theorem 7.3]{Villani03}. This metric is called a Kantorovich--Rubinstein distance. 
			For any function $\rho\in L^1(\R^n)$ with zero average,
			\[
				\int \rho\, dx=0\,,
			\]
			we introduce the norm		
			\[
				\mathcal D_c(\rho):=\mathcal D_c(\rho,0):=\mathcal D_c(\rho^+,\rho^-),
			\]
			where the superscripted plus and minus signs indicate the positive and the negative parts, respectively.
						
			We note that the primal problem \eqref{transp-cost} admits a unique minimizer $\po \in \Pi(\rho^+,\rho^-)$, called optimal transport plan, and the dual problem \eqref{var} admits a (non-unique) maximizer $\zo$, called Kantorovich potential, which are characterized by the identity
			$$\zo(x)-\zo(y)=d(x,y)\qquad \mbox{ for } d\po\mbox{-almost all } (x,y)\,,$$
			cf.~\cite[Theorem 2.45]{Villani03} . It is not difficult to infer from this identity that $\zo$ is weakly differentiable with
			\begin{equation}\label{weak-diff}
				\nabla \zo(x) = \nabla \zo(y) =\nabla_x d(x,y)=c'(|x-y|)\frac{x-y}{|x-y|}\qquad \mbox{ for } d\po\mbox{-almost all } (x,y).
			\end{equation}
			Morover, there exist two maps $S$ and $T$ such that
			\begin{equation}
				\label{2}
				\po = (\id \times T)_{\#}\rho^+ = (S\times \id)_{\#}\rho^-,
			\end{equation}
			and $S$ and $T$ obey the relations $\rho^+ = S_{\#}\rho^-$ and $\rho^- = T_{\#} \rho^+$; cf.~\cite{GangboMcCann96,PegonSantambrogioPiazzoli15}.
										
			In most parts of this paper, we will consider a  smooth variant of the bounded logarithmic cost function introduced in \cite{Seis16a}, namely
			\begin{equation}\label{cost-function}
				c_{\delta}(z)=\log\left(\frac{\tanh(z)}{\delta}+1\right)\,,
			\end{equation}
			and write $\D_{\delta}(\rho)$ as an abbreviation of $\D_{c_{\delta}}(\rho)$ for notational convenience. In the following, $\po$ and $\zo$ will always denote the optimal transport plan and Kantorovich potentials corresponding to this norm. If $d_{\delta}(x,y)$ is analogously defined, we notice that $d_{\delta}(x,y) \leq \delta^{-1}|x-y|$, and thus $\zo$ is a Lipschitz function and by normalizing $\zo(0)=0$ it is bounded by $\log(\delta^{-1}+1)$. For later reference, we notice that \eqref{weak-diff} becomes
			\begin{equation}
				\label{1}
				\grad \zo (x) = \grad\zo(y) =\frac{1-\tanh^2(|x-y|)}{\delta + \tanh(|x-y|)}\frac{x-y}{|x-y|}
			\end{equation}
			for $d\po$-almost all $(x,y)$.														
						
			We finally consider the Kantorovich--Rubinstein norm
			\[
				\D(\rho) : = \inf_{\pi \in \Pi(\rho^+,\rho^-)} \iint \tanh |x-y|\, d\pi(x,y)
			\]
			on the space of functions with zero average.  A control of $\D(\rho)$ by $\D_{\delta}(\rho)$ is established in the following lemma.
								
			\begin{lemma}\label{lemma-extra}
				Let $\rho$ be an average-zero function in $L^1(\R^n)$.
				Then for any $\gamma>0$ and $\delta>0$ it holds that
				\begin{equation}\label{new-dist-est}
					\D(\rho)\leq  \frac{\Dd(\rho)}{\log\frac1\gamma}+\frac{\delta}{\gamma} \|\rho\|_{L^1}.
				\end{equation}
			\end{lemma}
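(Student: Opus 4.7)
The plan is to use the optimal transport plan $\po$ for the cost $c_\delta$ (i.e., the minimizer realizing $\Dd(\rho) = \iint c_\delta(|x-y|)\,d\po(x,y)$) as a competitor in the variational definition of $\D(\rho)$. Since $\po \in \Pi(\rho^+,\rho^-)$, this immediately yields
\[
\D(\rho) \;\leq\; \iint \tanh|x-y|\,d\po(x,y),
\]
and the task reduces to controlling the integrand pointwise by $c_\delta(|x-y|)/\log(1/\gamma)$ plus an error proportional to $\delta/\gamma$.

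The next step is a case analysis, splitting $\R^n \times \R^n$ according to whether $\tanh|x-y|$ lies below or above the threshold $\delta/\gamma$. On the set $\{\tanh|x-y| \leq \delta/\gamma\}$ one trivially bounds $\tanh|x-y| \leq \delta/\gamma$. On the complement $\{\tanh|x-y| > \delta/\gamma\}$ one observes that $c_\delta(|x-y|) = \log\bigl(\tanh|x-y|/\delta + 1\bigr) > \log(1/\gamma + 1) > \log(1/\gamma)$, while $\tanh|x-y| \leq 1$; consequently
\[
\tanh|x-y| \;\leq\; 1 \;\leq\; \frac{c_\delta(|x-y|)}{\log(1/\gamma)}.
\]

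Combining the two cases and integrating against $\po$ gives
\[
\iint \tanh|x-y|\,d\po \;\leq\; \frac{\delta}{\gamma}\,\po(\R^n\times\R^n) \;+\; \frac{1}{\log(1/\gamma)}\iint c_\delta(|x-y|)\,d\po.
\]
Since the total mass of $\po$ equals $\|\rho^+\|_{L^1} = \tfrac{1}{2}\|\rho\|_{L^1} \leq \|\rho\|_{L^1}$ and the $c_\delta$-integral is exactly $\Dd(\rho)$, the announced bound \eqref{new-dist-est} follows at once.

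There is no real obstacle here beyond choosing the threshold well: picking the splitting level at $\tanh|x-y| = \delta/\gamma$ makes the logarithmic term $c_\delta$ exceed $\log(1/\gamma)$ exactly where we need to trade $\tanh \leq 1$ against $c_\delta/\log(1/\gamma)$, and matches the two error terms in \eqref{new-dist-est} cleanly. The approach does not require any information about the structure of $\po$ (optimality is not even used, only admissibility), which is what allows the argument to be so short.
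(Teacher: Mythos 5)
Your proof is correct and follows essentially the same route as the paper: use the optimal $c_\delta$-plan $\po$ as a competitor for $\D(\rho)$, split the integration domain into a ``near'' and a ``far'' region, bound $\tanh|x-y|$ by $\delta/\gamma$ on the former and by $c_\delta(|x-y|)/\log(1/\gamma)$ on the latter, and integrate. The only cosmetic difference is that you split at $\tanh|x-y|=\delta/\gamma$ whereas the paper splits at $c_\delta(|x-y|)=\log(1/\gamma)$, i.e.\ $\tanh|x-y|=\delta(\gamma^{-1}-1)$; both choices give identical final bounds.
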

																							
			This is a variant of an estimate first proved in \cite{Seis16a}. For the convenience of the reader, we redo the short proof with the modified distance functions.	
			\begin{proof}
				We define $K=\{(x,y)\in \R^n\times\R^n:\:  c_{\delta}(|x-y|)\leq \log\frac1\gamma)\}$ and denote by $K^c$ it complement. Throughout the proof, $\po$ denotes the optimal transport plan corresponding to $\D_{\delta}(\rho)$. On the one hand, we have 
				$$\iint_{K} \tanh |x-y|\,  d\po\leq \frac{\delta}{\gamma}\po[K]\,$$
				and $\po[K]$ is bounded	by $\po[\R^n\times \R^n] = \|\rho\|_{L^1}$. On the other hand, by the boundedness of the hyperbolic tangent, we estimate
				\[
					\iint_{K^c}\tanh |x-y|\, d\po\leq \po[K^c]\le  \frac1{\log\frac1\gamma} \iint_{K^c} c_{\delta}(|x-y|) \, d\po(x,y) \le \frac{\Dd(\rho)}{\log\frac1\gamma}.
				\]  
				Combining both estimates yields
				\[
					\D(\rho) \le \iint \tanh|x-y|\, d\po(x,y) \le  \frac{\Dd(\rho)}{\log\frac1\gamma}+\frac{\delta}{\gamma} \|\rho\|_{L^1}. \qedhere
				\]
				
			\end{proof}	
																																																																														
			\section{Uniqueness of distributional solutions of the continuity equation}\label{Uni}
										
			In this section, we state and prove our first main result, the well-posedness of the Cauchy problem \eqref{CE} in the sense of distributions introduced in Definition \ref{def:ds}. To specify the assumptions on the velocity field, we assume that 
			\begin{equation}
				\label{u-1} u\in L^{p,\infty}((0,T)\times \R^n)
			\end{equation} 
			for some $p>1$ and that $\grad u = K\ast \omega$ for some $L^1$ function $\omega$, which in components reads
			\begin{equation}
				\label{13}
				\partial_i u_j = \sum_{\ell=1}^L K_{ij}^{\ell}\ast \omega^{\ell}_{ij} \quad \mbox{for some }\omega_{ij}^1,\dots, \omega_{ij}^L\in L^1((0,T);L^1(\R^n)),
			\end{equation}
			for any $i,j\in\{1,\dots,n\}$, where the $K_{ij}^{\ell}$'s satisfy the hypotheses K1) to K5). Moreover we suppose that 
			\begin{equation}\label{u-2}
				\nabla\cdot u\in L^1((0,T);L^{\infty}(\R^n)).                 
			\end{equation}
			We remark that condition \eqref{u-1} substitutes the usual growth condition assumed in the DiPerna--Lions theory. We also impose that the initial datum is integrable and bounded, i.e.,
			\begin{equation}
				\label{16}
				\rho_0\in L^{\infty}\cap L^1(\R^n).
			\end{equation}
						
			Let us now give a precise result.												
																																		  
			\begin{theorem}\label{th1}
				Let $u$ be a velocity field satisfying \eqref{u-1}, \eqref{13} and \eqref{u-2} and let the initial datum $\rho_0$ be such that \eqref{16} holds. Then the Cauchy problem \eqref{CE} has a unique distributional solution $\rho$ in the class $ L^{\infty}((0,T);L^{\infty}\cap L^{1}(\R^n))$.
			\end{theorem}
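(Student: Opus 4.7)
By linearity of \eqref{CE}, uniqueness reduces to showing that any distributional solution $\rho\in L^{\infty}((0,T);L^1\cap L^\infty(\R^n))$ with $\rho_0\equiv 0$ vanishes identically; existence in the stated class then follows by a standard regularization of $u$ and $\rho_0$, using the $L^1(L^\infty)$ bound on $\div u$ to preserve the $L^1\cap L^\infty$ control, and invoking the uniqueness estimate as a stability tool to pass to the limit. Since the equation is mass-preserving and $\int\rho_0\,dx=0$, one has $\int\rho(t)\,dx=0$ for every $t$, so the bounded logarithmic Kantorovich--Rubinstein norm $\Dd(\rho(t))$ from Section~\ref{sec:ot} is well defined. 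The strategy is to derive a bound on $\Dd(\rho(t))$ uniform in $\delta$ and then let $\delta\downarrow 0$ through Lemma~\ref{lemma-extra}.

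Testing the PDE against the optimal Kantorovich potential $\zo$ associated to $\Dd(\rho(t))$, writing $\rho^\pm(t)$ as the marginals of $\po$, and exploiting the identity $\grad\zo(x)=\grad\zo(y)$ on $\supp\po$ from \eqref{1}, the positive and negative contributions combine into a difference and yield
\[
\frac{d}{dt}\Dd(\rho(t)) \le \iint \grad\zo(x)\cdot\bigl(u(t,x)-u(t,y)\bigr)\,d\po(x,y).
\]
Plugging in the Morrey-type estimate \eqref{10} and the $\delta$-uniform pointwise bound
\[
|\grad\zo(x)|\,|x-y| \;=\; \frac{|x-y|(1-\tanh^2|x-y|)}{\delta+\tanh|x-y|}\;\lesssim\;1,
\]
which holds because the right-hand side, as a function of $r=|x-y|$, vanishes at $r=0$ and decays exponentially as $r\to\infty$, produces the $\delta$-independent differential inequality $\tfrac{d}{dt}\Dd(\rho(t))\lesssim\int G(t,x)\,|\rho(t,x)|\,dx$.

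Integrating in time and splitting $G=\Ua+\Ub$ via \eqref{11} for a fixed $\eps>0$, the $\Ub$ contribution is handled by Cauchy--Schwarz, the bound $\|\Ub\|_{L^1(L^2)}\le C_\eps$, and the embedding $\rho\in L^\infty(L^1\cap L^\infty)\hookrightarrow L^\infty(L^2)$. The more delicate $\Ua$ contribution, where $\Ua$ lies only in weak $L^1$, is handled by the logarithmic interpolation Lemma~\ref{L10} applied to the finite auxiliary measure $d\mu=|\rho|\,dx\,dt$ (finite because $\rho\in L^\infty(L^1)$ and $T<\infty$), using the essential boundedness of $\rho$ to transfer the smallness $\|\Ua\|_{L^1(L^{1,\infty})}\le\eps$ to $\mu$. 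With careful bookkeeping this produces $\Dd(\rho(t))\le B$ for a constant $B$ depending on $\eps$ but crucially \emph{independent of $\delta$}. Applying Lemma~\ref{lemma-extra} with $\gamma=\sqrt{\delta}$ then gives
\[
\D(\rho(t)) \;\le\; \frac{B}{\tfrac12\log(1/\delta)}+\sqrt{\delta}\,\|\rho(t)\|_{L^1}\;\xrightarrow[\delta\downarrow 0]{}\;0,
\]
and since $\D$ is a metric on mean-zero $L^1$ densities we conclude $\rho(t)\equiv 0$.

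The truly delicate step is the estimate of $\int\!\!\int \Ua\,|\rho|\,dx\,dt$: since $\Ua$ is only weakly $L^1$-integrable, na\"ive H\"older duality fails, and one must pass to the finite auxiliary measure $\mu=|\rho|\,dx\,dt$ and exploit the logarithmic structure of Lemma~\ref{L10}. That logarithmic factor is exactly what couples the weak smallness of $\Ua$ to the strong a priori bounds on $\rho$, while the joint $L^1\cap L^\infty$ control on $\rho$ is used both to make the relevant weak-type quantities in Lemma~\ref{L10} manageable and to guarantee that the resulting bound on $\Dd$ is genuinely uniform in $\delta$; the whole scheme then closes through the logarithmic nature of $c_\delta$, exactly as in Lemma~\ref{lemma-extra}.
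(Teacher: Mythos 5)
Your overall outline mirrors the paper's strategy (Kantorovich--Rubinstein norm, derivative formula, Morrey-type difference quotient bound, $G = \Ua+\Ub$ split, log-interpolation, then Lemma~\ref{lemma-extra}), but the proof has a genuine gap precisely where you claim it is ``truly delicate,'' and it arises from discarding the $\delta$-dependence one step too early.

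You replace $|\nabla\zo(x)|$ by $\lesssim 1/|x-y|$ via the pointwise bound $\frac{r(1-\tanh^2 r)}{\delta+\tanh r}\lesssim 1$, which is correct but too lossy: it converts the differential inequality into
\[
\frac{d}{dt}\Dd(\rho_t)\lesssim \iint \frac{|u(t,x)-u(t,y)|}{|x-y|}\,d\pi_t\lesssim \int G(t,\cdot)\,|\rho_t|\,dx,
\]
and from here you must estimate $\int_0^T\!\int \Ua\,|\rho|\,dx\,dt=\|\Ua\|_{L^1(\mu)}$. But $\Ua$ is only controlled in $L^1(L^{1,\infty})$ (cf.~\eqref{11}), and a weak-$L^1$ function on a finite measure space is in general \emph{not} integrable. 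Lemma~\ref{L10} cannot close this: it produces an $L^1(\mu)$ bound only when you \emph{also} have an $L^{p,\infty}(\mu)$ bound for some $p>1$ on the \emph{same} function whose $L^1$ norm you want, and there is no a priori $L^{p,\infty}$ bound on $\Ua$. This is exactly why the paper keeps the cruder inequality $\frac{1-\tanh^2 z}{\delta+\tanh z}\le\frac{1}{\delta+z}$, so that the integrand is dominated by
\[
\min\Bigl\{\frac{|u(t,x)|+|u(t,y)|}{\delta},\ \Ua(t,x)+\Ua(t,y)\Bigr\}+\Ub(t,x)+\Ub(t,y),
\]
and the first competitor $|u|/\delta$ furnishes the $L^{p,\infty}(\mu)$ bound (via assumption~\eqref{u-1}) needed for Lemma~\ref{L10}. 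The $\delta$ then reappears inside a logarithm, yielding a bound of the form $\eps\bigl[1+\log(1/(\eps\delta))\bigr]+C_\eps$ rather than a $\delta$-uniform constant $B$; the conclusion follows only after dividing by $|\log\delta|$ and letting $\delta\downarrow 0$ first, then $\eps\downarrow 0$. Your claim of a $\delta$-independent bound on $\Dd(\rho_t)$ is therefore both unproven and, in fact, not what the correct argument produces.

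A secondary but also real point: you did not carry the off-diagonal terms $\iint\Ua(t,y)\,d\pi_t(x,y)$, etc. Reducing everything to $\int\Ua|\rho|\,dx$ uses only the marginal identity on the diagonal terms; the cross terms require the optimal transport maps $S$, $T$ of~\eqref{2} together with the push-forward relations $\rho^\pm = S_\#\rho^\mp$ to convert them into comparable $L^{p,\infty}(\mu)$-type bounds. These are handled in the paper's Lemma~\ref{L2} but are absent from your sketch.
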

										
			Notice that distributional solutions are well-defined, because
			\begin{equation}
				\label{20}
				\|u\rho\|_{L^1} \lesssim \|\rho\|_{L^1}^{1-\frac1p} \|\rho\|_{L^{\infty}}^{\frac1p} \|u\|_{L^{p,\infty}}
			\end{equation}
			by the virtue of Lemma \ref{weak-embed}, and the right-hand side is finite by the assumptions on $\rho$ in the theorem and on $u$ in~\eqref{u-1}.
									
			It is worth pointing out that the assumption \eqref{u-2} on the divergence of $u$ is used only to prove existence, but it is not needed for uniqueness.	
									
			Under the  hypotheses \eqref{u-1}--\eqref{16}, (unique) Lagrangian solutions (see Definition \ref{def:ls}) were constructed in \cite{BouchutCrippa13}. These solutions solve \eqref{CE} also in the sense of distributions and in the sense of renormalized solutions (see Definition \ref{def:rs}). The new contribution of Theorem \ref{th1} is thus the uniqueness part. Moreover, combining our result with the ones in \cite{BouchutCrippa13}, we deduce the following corollary.
																
			\begin{corollary}\label{T2}					
				Under the assumptions of Theorem \ref{th1}, the notions of distributional,  Lagrangian and renormalized solutions are equivalent.
			\end{corollary}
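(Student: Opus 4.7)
The plan is to combine Theorem~\ref{th1} with the Lagrangian theory from \cite{BouchutCrippa13}. Under \eqref{u-1}--\eqref{16}, the latter already provides a unique Lagrangian solution $\bar\rho \in L^{\infty}((0,T); L^1 \cap L^\infty(\R^n))$ and guarantees that $\bar\rho$ is simultaneously a distributional and a renormalized solution. Thus I have at my disposal the implications L $\Rightarrow$ D and L $\Rightarrow$ R together with the uniqueness of L. Theorem~\ref{th1} supplies the uniqueness of D. With these pieces, the corollary reduces to establishing a single remaining implication, R $\Rightarrow$ D: once this is in place, every member of any of the three classes is forced to coincide with $\bar\rho$, and the three notions become equivalent.

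To prove R $\Rightarrow$ D, I would run a standard DiPerna--Lions-style approximation by admissible renormalizers. Specifically, I would choose a family $\beta_N \in C^1(\R)\cap L^\infty(\R)$, each vanishing in a neighborhood of the origin and with $|s\beta_N'(s)|$ bounded, satisfying $\beta_N(s) = s$ on $\{2/N \le |s| \le N\}$, $|\beta_N(s)| \le |s|$, and $\beta_N(s) \to s$ pointwise on $\R$; a concrete realization is $\beta_N(s) = s\,\chi(Ns)\,\psi(s/N)$ with appropriate smooth cutoffs $\chi$ (vanishing near zero) and $\psi$ (vanishing far away). Testing the renormalized formulation of Definition~\ref{def:rs} with such $\beta_N$ and letting $N \to \infty$, I would pass to the limit by dominated convergence term by term: the transport contribution converges thanks to $u\rho \in L^1$, a consequence of~\eqref{20}; the initial term converges using $\rho_0 \in L^1 \cap L^\infty$; and the source term $\iint (\div u)(\beta_N'(\rho)\rho - \beta_N(\rho))\phi\,dxdt$ vanishes in the limit, because for $N > \|\rho\|_{L^\infty}$ the integrand is supported on $\{|\rho| \le 2/N\}$ with a pointwise bound of order $1/N$, while $\div u \in L^1(L^\infty)$ and $\phi$ is compactly supported. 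This produces the distributional identity of Definition~\ref{def:ds}, and Theorem~\ref{th1} then identifies the solution with $\bar\rho$.

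I do not anticipate any substantive obstacle in this corollary: the hard analytic input (existence of the regular Lagrangian flow and of Lagrangian solutions under the singular integral assumption on $\nabla u$, and uniqueness of distributional solutions in the same regime) has already been carried out in \cite{BouchutCrippa13} and Theorem~\ref{th1}, respectively. The only technical care point is arranging the double cutoff in the $\beta_N$'s so as to meet simultaneously the admissibility conditions in Definition~\ref{def:rs} (vanishing near zero and $|s\beta_N'(s)|$ bounded for each fixed $N$) while still approximating the identity map pointwise as $N \to \infty$; this is completely analogous to the construction used in \cite[Theorem~II.3]{DiPernaLions89}.
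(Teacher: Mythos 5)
Your proposal is correct and follows the same logical chain the paper relies on: existence and uniqueness of the Lagrangian solution together with the implications L~$\Rightarrow$~D and L~$\Rightarrow$~R from \cite{BouchutCrippa13}, uniqueness of distributional solutions from Theorem~\ref{th1}, and the implication R~$\Rightarrow$~D, which the paper cites as the approximation argument of~\cite[Theorem~II.3]{DiPernaLions89} (see the remark after Definition~\ref{def:rs}) and which you carry out explicitly with the double-cutoff $\beta_N$. The only difference is presentational: the paper leaves this corollary without a written proof, while you have spelled out the one nontrivial implication, and your cutoff construction and the $O(1/N)$ bound on $(\div u)(\beta_N'(\rho)\rho-\beta_N(\rho))\phi$ on the set $\{|\rho|\le 2/N\}$ are both sound.
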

										
			Furthermore, it was shown in \cite{BouchutCrippa13} that Lagrangian solutions exist and are renormalized solutions even under the milder assumption that $\rho_0\in L^0$. Under the assumption that $u$ is divergence-free, the composed functions $\beta(\rho)$ also solve the continuity equation in the class $L^{\infty}(L^1)$, and are unique. From Theorem \ref{th1} we thus infer the following consequence.
										
			\begin{corollary}\label{c2}
				Let $u$ be a divergence-free velocity field satisfying \eqref{u-1} and \eqref{13} and let the initial datum $\rho_0$ be in $L^0$. Then there exists a unique renormalized solution to the Cauchy problem \eqref{CE}. Moreover, the notions of Lagrangian and renormalized solutions are equivalent.
			\end{corollary}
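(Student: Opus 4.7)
The strategy is to bootstrap from Theorem \ref{th1} by truncating renormalized solutions via admissible nonlinearities. Existence and the implication \emph{Lagrangian $\Rightarrow$ renormalized} are already available from \cite{BouchutCrippa13} under the standing hypotheses \eqref{u-1}--\eqref{13}: the unique regular Lagrangian flow $X$ exists, and $\rho(t,\cdot)=X(t,\cdot)_\#\rho_0$ provides a Lagrangian and renormalized solution belonging to $L^\infty((0,T);L^0(\R^n))$. The new task therefore reduces to the \emph{uniqueness} of renormalized solutions, after which the equivalence statement is automatic.

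Let $\rho_1,\rho_2 \in L^\infty((0,T);L^0(\R^n))$ be two renormalized solutions sharing the initial datum $\rho_0\in L^0(\R^n)$, and let $\beta\in C^1(\R)\cap L^\infty(\R)$ vanish on $[-\eps_0,\eps_0]$ with $|\beta'(s)s|$ bounded. Since $\beta(\rho_i)$ is supported in $\{|\rho_i|\ge\eps_0\}$, a set of finite Lebesgue measure for a.e.\ $t$, we have $\beta(\rho_i)\in L^\infty(L^1\cap L^\infty)$. Because $\div u=0$, the source term in Definition \ref{def:rs} drops, so the equation collapses to the plain distributional formulation of \eqref{CE} for $\beta(\rho_i)$ with initial datum $\beta(\rho_0)$. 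Theorem \ref{th1} then forces
\[
	\beta(\rho_1)=\beta(\rho_2)\qquad\text{a.e.\ in }(0,T)\times\R^n.
\]

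To promote this to $\rho_1=\rho_2$, I would apply the previous identity to a countable separating family of admissible $\beta$'s---for instance smooth bump approximations of $\mathbf{1}_I$ associated to a countable basis of open intervals $I$ with $\overline{I}\subset\R\setminus\{0\}$. This forces $\rho_1$ and $\rho_2$ to realize the same nonzero values on the same sets, and a further admissible $\beta$ nonvanishing at any disputed value rules out a mismatch between their zero sets. Hence $\rho_1=\rho_2$ a.e. The equivalence claim is then automatic: the Lagrangian solution produced in \cite{BouchutCrippa13} is renormalized, so any renormalized solution must equal it by uniqueness, and in particular is Lagrangian.

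The delicate point I foresee is the \emph{uniform-in-$t$} control of $\|\beta(\rho_i(t,\cdot))\|_{L^1}$ needed to place $\beta(\rho_i)$ in the class to which Theorem \ref{th1} applies. For the Lagrangian solution this comes for free from the compressibility bound \eqref{eq:compress}. For a general renormalized solution one must extract it from the precise interpretation of $L^\infty((0,T);L^0(\R^n))$---namely that $\mathcal{L}^n(\{|\rho_i(t,\cdot)|>\lambda\})$ is essentially bounded in $t$ for every $\lambda>0$---which is built into the DiPerna--Lions/renormalized framework. Once this uniform integrability is in hand, the argument above goes through unchanged.
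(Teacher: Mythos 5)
Your proposal matches the paper's argument: the paper likewise obtains existence (and the implication Lagrangian $\Rightarrow$ renormalized) from \cite{BouchutCrippa13}, observes that when $\div u=0$ the renormalized formulation means precisely that $\beta(\rho)\in L^\infty(L^1\cap L^\infty)$ is a distributional solution of \eqref{CE} with datum $\beta(\rho_0)$, and then invokes Theorem~\ref{th1} to get uniqueness of $\beta(\rho)$, hence of $\rho$ via a separating family of admissible $\beta$'s. Your worry about uniform-in-$t$ control of $\|\beta(\rho(t,\cdot))\|_{L^1}$ is handled exactly as you suspect, and is acknowledged in the paper right after Definition~\ref{def:rs}.
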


			Theorem \ref{th1} and Corollaries \ref{T2} and \ref{c2} contain all statements of Theorem \ref{Thm1} of the introduction.

			To simplify the notation in the following, we will simply write $\rho_t$, $\pi_t$ and $\zeta_t$ for $\rho(t,\cdot)$ $\po(t)$ and $\zo(t,\cdot)$, respectively.

			Our proof of Theorem \ref{th1} combines ideas recently developed in \cite{Seis16a} with the harmonic-analysis techniques from \cite{BouchutCrippa13}.			 
			The main tool is the following ``stability'' estimate, whose proof will be postponed to Subsection \ref{ss1}.							 
			\begin{prop}\label{pr3}
				Let $\rho\in L^{\infty}((0,T);L^{\infty}\cap L^1(\R^n))$ be a non-trivial solution of the continuity equation \eqref{CE} with zero average. Then there exists for every $\eps>0$ a finite constant $C_{\eps}>0$ such that for every $\delta>0$ it holds 
				\begin{equation}\label{stab-est}
					\sup_{0\le t\le T}\Dd(\rho_t)
				\lesssim \Dd(\rho_0)+ \varepsilon \|\rho\|_{L^1}\left[1+\log\left( \frac1{\eps\delta} \left(\frac{\|\rho\|_{L^1}}{\|\rho\|_{L^{\infty}}}\right)^{1-\frac1p}\|u\|_{L^{p,\infty}}\right)\right]+ C_{\varepsilon} \|\rho\|_{L^\infty( L^2)} .                                                                                                              \end{equation}
		\end{prop}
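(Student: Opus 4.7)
The plan is to combine the envelope-theorem-style computation from \cite{Seis16a} with the Morrey-type decomposition \eqref{10}--\eqref{11} of \cite{BouchutCrippa13}, and close the argument through the interpolation inequality of Lemma~\ref{L10}. By the duality \eqref{var} and a standard envelope argument (testing with an optimal potential at each time, cf.~\cite{Seis16a}), the continuity equation yields the integrated inequality
\begin{equation*}
\Dd(\rho_T)\le \Dd(\rho_0)+\int_0^T\!\!\int \grad \zo(t,\cdot)\cdot u\,\rho_t\,dx\,dt.
\end{equation*}
Writing $\rho_t=\rho_t^+-\rho_t^-$, using the factorizations \eqref{2} of $\po(t)$, and the identity $\grad\zo(t,x)=\grad\zo(t,y)$ on $\spt\po(t)$ from \eqref{weak-diff}, I rewrite the inner integral as $\iint \grad\zo(t,x)\cdot(u(t,x)-u(t,y))\,d\po(t)$.

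From the explicit form \eqref{1} of $\grad\zo$ and the elementary estimate $r(1-\tanh^2 r)/(\delta+\tanh r)\le C$, uniform in $\delta\ge 0$ (verified separately near zero and for large $r$), I extract two complementary pointwise bounds: the Morrey-type bound \eqref{10} yields
\begin{equation*}
|\grad\zo(t,x)|\,|u(t,x)-u(t,y)|\le C\bigl(G(t,x)+G(t,y)\bigr),
\end{equation*}
whereas the trivial bound $|\grad\zo|\le 1/\delta$ yields
\begin{equation*}
|\grad\zo(t,x)|\,|u(t,x)-u(t,y)|\le\bigl(|u(t,x)|+|u(t,y)|\bigr)/\delta.
\end{equation*}
Using the first estimate, integrating against $\po(t)$ through its marginals and then in time gives
\begin{equation*}
\sup_{0\le t\le T}\Dd(\rho_t)\le \Dd(\rho_0)+C\int_0^T\!\!\int G(t,x)\,|\rho(t,x)|\,dx\,dt.
\end{equation*}
Splitting $G=\Ua+\Ub$, the $\Ub$-piece is immediately handled by Cauchy--Schwarz together with $\|\Ub\|_{L^1(L^2)}\le C_\varepsilon$, producing the $C_\varepsilon\|\rho\|_{L^\infty(L^2)}$ contribution of \eqref{stab-est}.

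The heart of the argument is the $\Ua$-piece. I view the integrand on the product space $(0,T)\times\R^{2n}$ equipped with the measure $d\mu=d\po(t)(x,y)\,dt$ (whose total mass is proportional to $\|\rho\|_{L^1}$), and apply Lemma~\ref{L10} to $f(t,x,y)=\Ua(t,x)+\Ua(t,y)$. The weak-$L^1(\mu)$ estimate follows from $\|\Ua\|_{L^1(L^{1,\infty})}\le\varepsilon$ combined with the $L^\infty$-bound on $\rho$. For the weak-$L^p(\mu)$ estimate, since $\Ua$ itself admits no $L^{p,\infty}$-control, I instead invoke the \emph{second} pointwise estimate above, replacing $f$ by $(|u(t,x)|+|u(t,y)|)/\delta$ and using the hypothesis $u\in L^{p,\infty}$ via \eqref{weak-embed}. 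Lemma~\ref{L10} then precisely assembles the ratio $\mu(X)^{1-1/p}\|f\|_{L^{p,\infty}(\mu)}/\|f\|_{L^{1,\infty}(\mu)}$ into the $(\|\rho\|_{L^1}/\|\rho\|_{L^\infty})^{1-1/p}\|u\|_{L^{p,\infty}}/(\varepsilon\delta)$ appearing inside the logarithm of \eqref{stab-est}, with the crucial feature that $1/\delta$ enters only through the log.

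The main obstacle is precisely this coordination of two pointwise bounds living in different weak Lebesgue spaces: the Morrey bound is weak-$L^1$ (good scaling in $\varepsilon$, no $\delta$), while the trivial bound delivers $L^{p,\infty}$ control through $u$ alone but carries a bad $1/\delta$ factor; neither in isolation yields the sharp estimate, and Lemma~\ref{L10} is designed precisely to push $1/\delta$ into the logarithm rather than leave it as a leading factor. A secondary technical issue is the justification of the envelope step with a merely Lipschitz $\zo$ and non-smooth $u$ and $\rho$, for which one regularizes, derives the identity for the regularized problem, and passes to the limit using continuity of $\Dd$ under strong convergence of densities.
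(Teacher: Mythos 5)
Your overall strategy matches the paper's: an envelope argument via the Kantorovich duality, the explicit formula for $\grad\zo$ on $\spt\po$, the Morrey-type quotient bound \eqref{10} together with the decomposition $G=\Ua+\Ub$, Cauchy--Schwarz for the $\Ub$ contribution, and Lemma~\ref{L10} for the $\Ua$ contribution so that $\delta^{-1}$ enters only through a logarithm. Your choice to work directly on the product space $(0,T)\times\R^{2n}$ with the measure $d\po(t)\,dt$ is a mild and legitimate streamlining; the paper instead splits into diagonal and off-diagonal pieces and pushes the off-diagonal terms through the optimal maps $S,T$ of \eqref{2} to reduce everything to the measure $|\rho|\,dx\,dt$ on $\R^{n+1}$, but both routes yield the same bound since the total mass of $\po(t)\,dt$ is again $\sim\|\rho\|_{L^1}$.

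There is, however, a genuine gap in your interpolation step. You propose to apply Lemma~\ref{L10} to $f(t,x,y)=\Ua(t,x)+\Ua(t,y)$ and then to obtain the $L^{p,\infty}(\mu)$ control by \emph{replacing} $f$ with the different function $(|u(t,x)|+|u(t,y)|)/\delta$. That is not a valid use of the lemma: inequality \eqref{interpol} concerns a single function $f$ and requires control of both $\|f\|_{L^{1,\infty}(\mu)}$ and $\|f\|_{L^{p,\infty}(\mu)}$ for that \emph{same} $f$; the function $\Ua(x)+\Ua(y)$ simply has no $L^{p,\infty}(\mu)$ bound, so the lemma cannot be applied to it. The correct object is the pointwise minimum of your two bounds. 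After the splitting $G=\Ua+\Ub$ and the elementary inequality $\min\{a+b,c\}\le a+\min\{b,c\}$ for $a\ge 0$, the part of the integrand remaining once the $\Ub$-contribution is removed is bounded by
\begin{equation*}
\psi(t,x,y):=\min\left\{\Ua(t,x)+\Ua(t,y),\ \frac{|u(t,x)|+|u(t,y)|}{\delta}\right\},
\end{equation*}
and it is $\psi$, not $\Ua(x)+\Ua(y)$, that must be fed into Lemma~\ref{L10}. Being a minimum, $\psi$ inherits \emph{both} weak-type bounds simultaneously: $\|\psi\|_{L^{1,\infty}(\mu)}\lesssim\eps\|\rho\|_{L^\infty}$ from \eqref{11} and the $L^\infty$ bound on $\rho$ (using the quasi-triangle inequality for the $L^{1,\infty}$ quasi-norm of $\Ua(x)+\Ua(y)$), and $\|\psi\|_{L^{p,\infty}(\mu)}\lesssim\delta^{-1}\|\rho\|_{L^\infty}^{1/p}\|u\|_{L^{p,\infty}}$ from \eqref{u-1}. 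This is precisely the device used in Lemma~\ref{L2} of the paper, where $\psi=\min\{|u|/\delta,\Ua\}$; the ``coordination of two pointwise bounds'' you identify as the main obstacle is resolved by encoding both in a single min, not by swapping $f$ between the two steps of the interpolation. With this correction, your product-space variant of the argument closes.
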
  
																															
		The proof of Theorem \ref{th1} follows directly from Proposition \ref{pr3}. 
																															 
		\begin{proof}[Proof of Theorem \ref{th1}]
			Existence of distributional solutions follows immediately from the construction of Lagrangian solutions in \cite[Section 7]{BouchutCrippa13}. In view of the linearity of the problem, uniqueness holds if the trivial solution is the unique solution with $\rho_0=0$. We argue by contradiction and assume that there is a non-trivial solution in $L^{\infty}(L^{\infty}\cap L^1)$  with zero initial datum. Then Proposition \ref{pr3} yields
			$$ \sup_{0\le t\leq T}\Dd(\rho_t)
		\lesssim\varepsilon \|\rho\|_{L^1}\left[1+\log\left( \frac1{\eps\delta} \left(\frac{\|\rho\|_{L^1}}{\|\rho\|_{L^{\infty}}}\right)^{1-\frac1p}\|u\|_{L^{p,\infty}}\right)\right]    + C_{\varepsilon} \|\rho\|_{L^\infty( L^2)} .$$
		Since, by assumption $u$ and $\rho$ are bounded in $L^{p,\infty}$ and $L^{\infty}(L^{\infty}\cap L^{1})$, respectively, we may write
		$$ \sup_{0\le t\leq T}\Dd(\rho_t)
		\lesssim C \varepsilon\left[1+\log\left(\frac{1}{\delta \varepsilon}\right)\right]+
		C_{\varepsilon} \,,$$
		where the constant $C$ depends on $\|\rho\|_{L^{\infty}}$,$\|\rho\|_{L^1}$ and $\|u\|_{L^{p,\infty}}$.
		We let $\theta>0$ be arbitrarily small and we fix a $\varepsilon$ such that
		$$\frac{\varepsilon\left[1+\log\left(\frac{1}{\delta \varepsilon}\right)\right]}{|\log \delta|}\leq \frac{\theta}{2}\quad \mbox{ uniformly  in }\delta\ll1\,.$$
		Notice that this  is possible because
		$$\frac{\varepsilon\left[1+\log\left(\frac{1}{\delta \varepsilon}\right)\right]}{|\log \delta|}=\frac{\varepsilon (1+|\log\varepsilon|+|\log \delta|)}{|\log \delta|}\leq \varepsilon\left(2+|\log\varepsilon|\right)\,,$$
		and the right-hand-side converges to $0$ as $\varepsilon\rightarrow 0$.
		Now that $\varepsilon$ and in particular $C_{\varepsilon}$ are fixed, we choose $\delta$ such that 
		$$\frac{ C_{\varepsilon}}{|\log \delta|}\leq \frac{\theta}{2}\,,$$
		and obtain that $\Dd(\rho_t) \lesssim \theta |\log \delta|$ uniformly in $t$.  Because $\theta$ was arbitrary, the latter implies that
		\begin{equation}\label{r}
			\frac{\Dd(\rho_t)}{|\log\delta|}\rightarrow 0 \mbox { as } \delta \rightarrow 0
		\end{equation}
		for all times $t$.	
				
		It remains to conclude that \eqref{r} implies that $\rho_t=0$ for all $t$, which contradicts the hypothesis at the beginning of the proof. In fact, from Lemma \ref{lemma-extra} with $\gamma = \sqrt\delta$ it follows that
		\[
			\D(\rho_t) \le \sqrt{\delta} \|\rho_t\|_{L^1} + 2\frac{\Dd(\rho_t)}{|\log \delta|}.
		\]
		Letting $\delta\to 0$, we find $\D(\rho_t)= 0$ thanks to \eqref{r}, and thus $\rho_t=0$ because $\D$ is a norm. This concludes the proof.
	\end{proof}
																			
	%																	
	%
	%				\begin{proof}
	%					The existence of a Lagrangian and renormalized solution of \eqref{CE} follows from Theorem \ref{teo:bc}. Suppose that $\rho_1$ and $\rho_2$ are two renormalized solutions. Since $\beta$ vanishes in a neighbourhood of $0$ and $u$ is divergence-free it follows that $B:=\beta(\rho_1)-\beta(\rho_2)\in L^{\infty}((0,T);L^{\infty}\cap L^1(\R^n))$
	%					is a distributional solution of 
	%					\begin{equation*}
	%						\begin{array}{rcc}
	%							\partial_tB+\nabla\cdot (uB) & = & 0\,, \\
	%							B(0,\cdot)                   & = & 0\,. 
	%						\end{array}
	%					\end{equation*}
	%					Then, by Theorem \ref{th1}, we that $B=0$ a.e. in $(0,T)\times\R^{n}$, $\beta(\rho_1)=\beta(\rho_2)$. Then, by varying $\beta$ we conclude that $\rho_1=\rho_2$ a.e. in $(0,T)\times\R^{n}$.
	%				\end{proof}
	%																				

	\subsection{Proof of Proposition \ref{pr3}}\label{ss1}
	In most parts of the proof, we follow \cite{Seis16a}. Starting point is the following rate of change formula for the Kantorovich--Rubinstein norm $\Dd(\rho_t)$, which is valid for distributional solutions to the continuity equation.
																				  
	\begin{lemma}\label{lemma1}
				
		The mapping	$ t \mapsto \Dd(\rho_t)$ is  absolutely continuous with
		\begin{equation}\label{e:deri}
			\frac{d}{dt}\Dd(\rho_t)=\int \nabla \zeta_t\cdot u(t,\cdot) \rho_t\, dx, \qquad \text{for a.e.~$t \in (0,T)$,}
		\end{equation}
		where $\zeta_t$ is the Kantorovich potential corresponding to $\Dd(\rho_t)$. 
	\end{lemma}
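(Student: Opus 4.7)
The plan is to combine the dual formulation \eqref{var} of the Kantorovich--Rubinstein norm with the distributional form of the continuity equation, tested against the (bounded, Lipschitz) Kantorovich potentials themselves. First I note that \eqref{1} gives the uniform pointwise bound $|\nabla \zeta_t| \le 1/\delta$, and since the cost $c_\delta$ is itself bounded by $\log(\delta^{-1}+1)$, the normalization $\zeta_t(0) = 0$ makes $\zeta_t$ a globally bounded Lipschitz function. Using $\zeta_s$ and $\zeta_t$ as admissible competitors in each other's Kantorovich problem produces the sandwich
\begin{equation*}
\int \zeta_s(\rho_t - \rho_s)\,dx \;\le\; \Dd(\rho_t) - \Dd(\rho_s) \;\le\; \int \zeta_t(\rho_t - \rho_s)\,dx.
\end{equation*}

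Next, I would test the continuity equation against the bounded Lipschitz functions $\zeta \in \{\zeta_s, \zeta_t\}$. Since the definition of distributional solution requires smooth compactly supported test functions, I approximate $\zeta$ by $\chi_R (\zeta * \eta_{\varepsilon})$ and pass to the limit as $\varepsilon \to 0$ and $R \to \infty$; these approximation steps are justified because $\zeta$ is bounded and Lipschitz, and $u\rho \in L^1((0,T)\times\R^n)$ by \eqref{20}. This produces the representation
\begin{equation*}
\int \zeta(\rho_t - \rho_s)\,dx = \int_s^t\!\int \nabla \zeta \cdot u\,\rho\,dx\,d\tau,
\end{equation*}
from which the sandwich together with $|\nabla \zeta| \le 1/\delta$ gives the Lipschitz-type estimate $|\Dd(\rho_t) - \Dd(\rho_s)| \le \delta^{-1}\int_s^t \|u\rho\|_{L^1(\R^n)}\,d\tau$, establishing the absolute continuity claim.

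For the pointwise derivative \eqref{e:deri}, I work at any $t$ which is simultaneously a differentiability point of $\Dd(\rho_\cdot)$ and a Lebesgue point of $\tau \mapsto u(\tau)\rho_\tau$ in $L^1(\R^n)$; such $t$ form a set of full measure. Dividing the sandwich by $t - s > 0$ and sending $s \to t$, the upper bound converges to $\int \nabla \zeta_t \cdot u(t)\rho_t\,dx$ by Lebesgue differentiation, because $\nabla \zeta_t$ is a fixed bounded multiplier independent of $\tau$.

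The main obstacle, and the delicate point, is the lower bound, which features the $s$-varying factor $\nabla \zeta_s$. Here I would apply Arzel\`a--Ascoli to the equi-Lipschitz and equi-bounded family $\{\zeta_s\}$ to extract, along any sequence $s_n \to t$, a subsequence converging locally uniformly to some admissible limit $\zeta^*$. Using the representation from the second paragraph applied to the bounded Lipschitz function $\zeta_{s_n} - \zeta^*$ and local $L^1$-convergence of $\zeta_{s_n}$ to $\zeta^*$, one verifies that $\int \zeta_{s_n}\rho_{s_n}\,dx \to \int \zeta^*\rho_t\,dx$, which combined with the continuity of $\Dd$ identifies $\zeta^*$ as an optimal potential for $\rho_t$. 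Finally, formula \eqref{1} and the uniqueness of the optimal transport plan for strictly concave costs (whence the maps $S, T$ of \eqref{2}) show that $\nabla \zeta$ is the same on $\spt\rho_t^\pm$ for every optimal potential, so $\int \nabla \zeta^* \cdot u(t)\rho_t\,dx = \int \nabla \zeta_t \cdot u(t)\rho_t\,dx$; this closes the sandwich and yields \eqref{e:deri}.
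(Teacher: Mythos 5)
Your proof of absolute continuity coincides with the paper's, and your expression for the derivative is correct, but you take a noticeably harder route through the lower bound. The paper avoids the compactness argument entirely: it applies only the single inequality $\Dd(\rho_t)-\Dd(\rho_{t-h})\le\int_{t-h}^{t}\!\int\nabla\zeta_t\cdot u\,\rho\,dx\,ds$ --- your upper bound, which follows from the admissibility of $\zeta_t$ in the dual problem \eqref{var} at time $t-h$ --- but for \emph{both} signs of $h$. Dividing by $h<0$ reverses the inequality, so the same fixed comparison function $\zeta_t$ delivers both a lower and an upper bound on the two one-sided difference quotients; Lebesgue differentiation together with the already established absolute continuity then gives \eqref{e:deri} at a.e.\ $t$, with no need to study $\zeta_s$ as $s\to t$. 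Your alternative --- extracting a locally uniform limit $\zeta^*$ of the equi-Lipschitz, equi-bounded family $\{\zeta_s\}$ via Arzel\`a--Ascoli, identifying it as an optimal potential at time $t$, and using the $\po$-a.e.\ identity \eqref{1} to equate $\nabla\zeta^*$ with $\nabla\zeta_t$ on $\spt|\rho_t|$ --- is valid and is in effect a stability statement for Kantorovich potentials, which is interesting in its own right but costs several extra steps. One of these needs a touch more care than your phrasing suggests: to pass from $\int\zeta_{s_n}\rho_{s_n}\,dx=\Dd(\rho_{s_n})$ to $\int\zeta^*\rho_t\,dx=\Dd(\rho_t)$, ``local $L^1$-convergence'' of $\zeta_{s_n}$ does not by itself control the far field against the merely $L^1$-bounded family $\rho_{s_n}$. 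The clean way is to first apply the integral representation with test function $\zeta_{s_n}-\zeta^*$ to replace $\rho_{s_n}$ by the fixed density $\rho_t$, the correction being $O\bigl(\delta^{-1}\int_{s_n}^{t}\|u\rho\|_{L^1}\,d\tau\bigr)\to0$, and only then invoke locally uniform convergence and dominated convergence against $\rho_t\in L^1(\R^n)$. With that made explicit, your argument closes.
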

						
	The statement of the lemma was already proved in  \cite{Seis16a}. Here, we present a slightly simplified argument for the convenience of the reader.
					
	\begin{proof}
		We first notice that the definition of distributional solutions, Definition \ref{def:ds}, and a standard approximation argument imply that
		\begin{equation}
			\label{14}
			\int \zeta \left(\rho_t - \rho_{t-h}\right) \, dx = \int \grad\zeta \cdot \left(\int_{t-h}^t u(s,\cdot)\rho_s\, ds\right)dx
		\end{equation}
		for all $\zeta \in C_c^{\infty}(\R^n)$ and a.e.~$t\in(0,T)$ and $h\in\R$ such that $t-h\in (0,T)$. Moreover, because $\rho$ and $u\rho$ are both in $L^1(L^1)$, cf.~\eqref{20}, it is enough to consider \eqref{14} for $\zeta$'s in $W^{1,\infty}(\R^n)$. 
		
We first show that the mapping	$ t \mapsto \Dd(\rho_t)$ is absolutely continuous, hence classically differentiable at a.e.~$t \in (0,T)$. By the optimality of $\zeta_t$ at time $t$, it holds that
		\begin{align}\label{e:AC1}
			\Dd(\rho_t)- \Dd(\rho_{t-h}) & \le \int \zeta_t \left(\rho_t-\rho_{t-h}\right)\, dx        \\
			                             & = \int_{t-h}^t \int \grad \zeta_t \cdot u(s, \cdot)\rho_s\, dxds, 
		\end{align}
		for a.e.~$t\in(0,T)$ and $h\in\R$. Analogously, by the optimality of $\zeta_{t-h}$ at time $t-h$, it holds that
		\begin{align}\label{e:AC2}
			\Dd(\rho_t)- \Dd(\rho_{t-h}) & \ge \int \zeta_{t-h} \left(\rho_t-\rho_{t-h}\right)\, dx        \\
			                             & = \int_{t-h}^t \int \grad \zeta_{t-h} \cdot u(s,\cdot)\rho_s\, dxds, 
		\end{align}
		for a.e.~$t\in (0,T)$ and $h\in\R$. Using that $\zeta_t$ is Lipschitz with $\|\nabla\zeta_t\|_{\infty}\leq1/\delta$ for a.e.~$t \in (0,T)$, we can combine~\eqref{e:AC1} and~\eqref{e:AC2} to the effect that
		$$
		\left| \Dd(\rho_t)- \Dd(\rho_{t-h}) \right| \leq \frac{1}{\delta} \left| \int_{t-h}^t \int | u(s, \cdot)\rho_s | \, dxds \right|
		$$
		for a.e.~$t\in (0,T)$ and $h\in\R$. Using again that $u\rho\in L^1(L^1)$ by~\eqref{20}, we conclude that $t \mapsto \Dd(\rho_t)$ is absolutely continuous.
		
		We eventually prove the expression~\eqref{e:deri} for the derivative. To this aim, it is enough to consider again~\eqref{e:AC1}, divide by $h$, and let $h\to 0$. By Lebesgue's differentiation theorem we find
		$$
		\lim_{h \downarrow 0} \frac{\Dd(\rho_t)- \Dd(\rho_{t-h})}{h} \leq \int \nabla \zeta_t\cdot u(t,\cdot) \rho_t\, dx
		$$
		and
		$$
		\lim_{h \uparrow 0} \frac{\Dd(\rho_t)- \Dd(\rho_{t-h})}{h} \geq \int \nabla \zeta_t\cdot u(t,\cdot) \rho_t\, dx,
		$$
		which implies~\eqref{e:deri} at a.e.~$t$.
	\end{proof}

	In the next step, we integrate the identity from Lemma \ref{lemma1} and estimate the right-hand side with the help of 	the explicit formulas we found for $\grad \zeta_t$ on $\spt \pi_t$.				
																													  
	\begin{lemma}\label{L1} It holds that
		\[
			\sup_{0\leq t\le T}  \Dd(\rho_t) \le \Dd(\rho_0) + \int_0^T \iint  \frac{|u(t,x)-u(t,y)|}{\delta +|x-y|}\, d\pi_t(x,y)dt.
		\]
	\end{lemma}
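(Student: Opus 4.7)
The strategy is to integrate the identity from Lemma~\ref{lemma1} in time and then rewrite the resulting integrand against the optimal transport plan $\pi_s$. Because $t \mapsto \Dd(\rho_t)$ is absolutely continuous by Lemma~\ref{lemma1}, one gets for any $t \in [0,T]$
\[
\Dd(\rho_t) = \Dd(\rho_0) + \int_0^t \int \grad \zeta_s \cdot u(s,\cdot)\, \rho_s\, dx\, ds.
\]
It therefore suffices to bound the inner integral at each $s$ by $\iint \frac{|u(s,x)-u(s,y)|}{\delta + |x-y|}\, d\pi_s(x,y)$, since the time integral up to $t$ is then in turn bounded by the integral up to $T$, allowing the supremum in $t$ to be taken for free.

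For the key reformulation I would split $\rho_s = \rho_s^+ - \rho_s^-$ and invoke the marginal property \eqref{marginal}; the integrability required for this splitting is available because $u\rho \in L^1$ by \eqref{20} and $\grad \zeta_s$ is bounded by $1/\delta$. This yields
\[
\int \grad \zeta_s \cdot u(s,\cdot)\, \rho_s\, dx = \iint \bigl[ \grad \zeta_s(x) \cdot u(s,x) - \grad \zeta_s(y) \cdot u(s,y) \bigr]\, d\pi_s(x,y).
\]
The crucial observation, recorded in \eqref{1}, is that $\grad \zeta_s(x) = \grad \zeta_s(y)$ for $d\pi_s$-almost every $(x,y)$, so the bracket collapses to $\grad \zeta_s(x) \cdot (u(s,x) - u(s,y))$, and its absolute value is bounded by $|\grad \zeta_s(x)|\,|u(s,x)-u(s,y)|$.

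The final ingredient is the pointwise estimate $|\grad \zeta_s(x)| \le 1/(\delta + |x-y|)$ on $\spt \pi_s$. In view of the explicit formula in \eqref{1}, this amounts to the elementary scalar inequality
\[
\frac{1-\tanh^2 z}{\delta + \tanh z} \le \frac{1}{\delta + z} \qquad \text{for all } z \ge 0,\ \delta > 0,
\]
which after clearing denominators reduces to $z(1-\tanh^2 z) - \tanh z \le \delta \tanh^2 z$; since the right-hand side is nonnegative it suffices to verify $z(1-\tanh^2 z) \le \tanh z$, equivalently $2z \le \sinh(2z)$, a standard fact. Combining the three steps, passing to absolute values inside the time integral, and taking the supremum in $t$ yields the claim. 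I do not anticipate any real obstacle here, since the cost $c_\delta$ was designed precisely so that the gradient of its Kantorovich potential is compatible, on the transport plan, with the scale $1/(\delta+|x-y|)$.
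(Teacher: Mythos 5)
Your argument follows the paper's proof essentially line-for-line: integrate the identity of Lemma~\ref{lemma1} in time, use the marginal conditions~\eqref{marginal} together with the explicit gradient formula~\eqref{1} valid on $\spt\pi_s$, and apply the scalar inequality $\frac{1-\tanh^2 z}{\delta+\tanh z}\le\frac1{\delta+z}$, whose reduction to $2z\le\sinh(2z)$ is correct. The one small point the paper spells out and you leave implicit is that $\Dd(\rho_t)\to\Dd(\rho_0)$ as $t\to 0^+$, which holds because the initial datum is attained weakly and Kantorovich--Rubinstein distances metrize weak convergence.
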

						
	A very similar version of this estimate was first derived in \cite{BOS} by using Lagrangian coordinates and the primal formulation \eqref{transp-cost}. Here, we follow \cite{Seis16a} which is based on the dual formulation \eqref{var} and \eqref{weak-diff}.
	\begin{proof} Using the marginal conditions \eqref{marginal} for the transport plans, we can rewrite the estimate from Lemma \ref{lemma1} as
		\[
			\frac{d}{dt} \Dd(\rho_t) = \iint \left(u(t,x)\cdot \grad\zeta_t(x) - u(t,y)\cdot \grad\zeta_t(y)\,\right) d\pi_t(x,y).
		\]
		This formulation is advantageous because the derivative of $\zo$ is explicitly known on $\spt \po$. Indeed, in view of \eqref{1}, we have
		\[
			\frac{d}{dt}\Dd(\rho_t)  = \iint \frac{1-\tanh^2(|x-y|)}{\delta + \tanh(|x-y|)}\frac{x-y}{|x-y|}\cdot(u(t,x)-u(t,y))\, d\pi_t(x,y).
		\]
		Thanks to the  elementary estimate \[
		0<\frac{1-\tanh^2(z)}{\delta +\tanh(z)} \le \frac1{\delta +z}
	\]
	for any nonnegative $z$, the latter becomes
			
	\[
		\frac{d}{dt}\Dd(\rho_t)
		\le \iint \frac{|u(t,x)-u(t,y)|}{\delta +|x-y|}\, d\pi_t(x,y).
	\]
	Integration in time, the fact that the initial value is attained weakly and the fact that Kantorovich--Rubinstein distances metrize weak convergence, cf.~\cite[Theorem 7.12]{Villani03}, imply the statement of the lemma.
	\end{proof}																												
	At this point, our proof substantially deviates from the one in \cite{Seis16a}, but exploits the techniques developed in \cite{BouchutCrippa13}. We first notice that \eqref{10} and the decomposition $G = G^1_{\eps} + G^2_{\eps}$ allow for estimating the integrand as
	\begin{equation}\label{15}
		\frac{|u(t,x)-u(t,y)|}{\delta + |x-y|}  \lesssim \min\left\{\frac{|u(t,x)|+|u(t,y)|}{\delta}, G^1_{\eps}(t,x) +G^1_{\eps}(t,y)\right\}+ G^2_{\eps}(t,x) + G^2_{\eps}(t,y)
	\end{equation}
	for a.e. $t$ and every $x,y\not\in N_t$, where $\mathcal{L}^n(N_t)=0$. Observe that this estimate holds for a.e. $t$ and $\pi_t$-a.e. $(x,y)\in \R^n\times \R^n$, because the marginals of the measure $\pi_t$ are absolutely continuous w.r.t the Lebesgue measure $\mathcal{L}^n$.
	
	We will estimate the integrals of the terms on the right-hand side separately. For the first term we have the following integral bound:																									  
	\begin{lemma}\label{L2}
		It holds that
		\begin{align*}
			\MoveEqLeft\int_0^T \iint \min\left\{\frac{|u(t,x)|+|u(t,y)|}{\delta}, \Ua(t,x) + \Ua(t,y)\right\}\, d\pi_t(x,y)dt\\
			  & \lesssim  \eps \|\rho\|_{L^{\infty}}\left(1+\log\left(\frac1{\eps\delta}\left(\frac{\|\rho\|_{L^1}}{\|\rho\|_{L^{\infty}}}\right)^{1-\frac1p}\|u\|_{L^{p,\infty}}\right)\right), 
		\end{align*}
		provided that $\rho\not\equiv0$.
	\end{lemma}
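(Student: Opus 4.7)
The plan is to recognize the left-hand side as the $L^1(\mu)$-norm of a single integrand with respect to the measure $d\mu(t,x,y) := d\pi_t(x,y)\,dt$ on $X:=(0,T)\times \R^n\times \R^n$, and then to apply the interpolation inequality of Lemma~\ref{L10} using two complementary pointwise upper bounds on the integrand. Set
\[
f(t,x,y) := \min\left\{ \tfrac{|u(t,x)|+|u(t,y)|}{\delta},\; \Ua(t,x)+\Ua(t,y) \right\},
\]
so that the quantity in question equals $\|f\|_{L^1(\mu)}$.

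First I would record the two ingredients needed to invoke Lemma \ref{L10}. Since the marginals of $\pi_t$ are $\rho_t^{\pm}$, each with density bounded pointwise by $\|\rho\|_{L^\infty}$, every function $g(t,x)$ satisfies
\[
\mu\bigl(\{(t,x,y)\in X : g(t,x)>\lambda\}\bigr) \;\leq\; \|\rho\|_{L^\infty}\int_0^T \L^n(\{x:g(t,x)>\lambda\})\,dt,
\]
and analogously when $g$ depends on $y$. Using the bound $f\leq \Ua(t,x)+\Ua(t,y)$ together with $\|G^1_\eps\|_{L^1(L^{1,\infty})}\leq \eps$ from \eqref{11}, this gives
\[
\|f\|_{L^{1,\infty}(\mu)} \;\lesssim\; \eps\,\|\rho\|_{L^\infty}.
\]
Using instead the bound $f\leq \delta^{-1}(|u(t,x)|+|u(t,y)|)$ together with $u\in L^{p,\infty}((0,T)\times\R^n)$, the analogous superlevel-set computation yields
\[
\|f\|_{L^{p,\infty}(\mu)} \;\lesssim\; \frac{\|\rho\|_{L^\infty}^{1/p}\,\|u\|_{L^{p,\infty}}}{\delta}.
\]
Finally, the total mass satisfies $\mu(X)=\int_0^T \|\rho_t^+\|_{L^1}\,dt\lesssim \|\rho\|_{L^1}$.

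Plugging these into Lemma \ref{L10} with the given $p>1$, the prefactor becomes $\|f\|_{L^{1,\infty}(\mu)}\lesssim \eps\|\rho\|_{L^\infty}$ and the argument of the logarithm becomes
\[
\frac{\mu(X)^{1-1/p}\,\|f\|_{L^{p,\infty}(\mu)}}{\|f\|_{L^{1,\infty}(\mu)}} \;\lesssim\; \frac{\|\rho\|_{L^1}^{1-1/p}\cdot \|\rho\|_{L^\infty}^{1/p}\|u\|_{L^{p,\infty}}/\delta}{\eps\,\|\rho\|_{L^\infty}} \;=\; \frac{1}{\eps\delta}\left(\frac{\|\rho\|_{L^1}}{\|\rho\|_{L^\infty}}\right)^{1-1/p}\|u\|_{L^{p,\infty}},
\]
which matches the expression in the claimed estimate exactly. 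The assumption $\rho\not\equiv 0$ ensures $\|\rho\|_{L^\infty}>0$ so the denominators in the log are well-defined; nonnegativity of the log (equivalently, the admissibility condition $\alpha\leq\beta$ in the proof of Lemma~\ref{L10}) can be arranged by, if needed, absorbing a trivially bounded regime into the $\eps\|\rho\|_{L^\infty}$ term.

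The main obstacle is purely bookkeeping: one must make sure that the weak-type bounds on $\Ua$ and on $u$, which are stated as space-time norms on $\R^n$-valued level sets, really do translate into the weak-type norms of $f$ viewed as a function on the product space $X$ against the ``transport-plan" measure $\mu$. The use of the marginal inequality with the $L^\infty$-bound of $\rho$ is precisely what allows this transfer and is the only non-cosmetic step; once it is in place, the interpolation Lemma \ref{L10} does all of the remaining work.
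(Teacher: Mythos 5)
Your proof is correct, and it reaches the same estimate through the same pair of tools --- the interpolation inequality of Lemma~\ref{L10} together with weak-type bounds derived from the boundedness of the marginals of $\pi_t$ --- but the bookkeeping is organized differently and a little more economically. The paper first splits the pointwise $\min$ into a sum of four elementary minima and groups them into a \emph{diagonal} part $I_1$ (both entries depend on the same spatial variable) and an \emph{off-diagonal} part $I_2$ (the entries depend on different variables). For $I_1$ the marginal relations \eqref{marginal} collapse the $d\pi_t$-integral to a single-variable integral against $|\rho|\,d\L^n$, and Lemma \ref{L10} is applied on $(0,T)\times\R^n$ with $d\mu = |\rho|\,d\L^1\otimes d\L^n$. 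For $I_2$, which genuinely lives on the product space, the paper invokes the Monge maps $S,T$ from \eqref{2} and the push-forward identities $\rho^+=S_\#\rho^-$, $\rho^-=T_\#\rho^+$ to again reduce to a single-variable integral. What you do instead is stay on the product space $(0,T)\times\R^n\times\R^n$ with the measure $d\mu=d\pi_t\,dt$ throughout, note that $\pi_t$ has marginals $\rho^{\pm}_t$ bounded by $\|\rho\|_{L^\infty}$, and bound $\|f\|_{L^{1,\infty}(\mu)}$ and $\|f\|_{L^{p,\infty}(\mu)}$ by using that each of $G^1_\e(t,x)+G^1_\e(t,y)$ and $\delta^{-1}(|u(t,x)|+|u(t,y)|)$ is a sum of two single-variable functions together with the quasi-triangle inequality for weak Lebesgue quasi-norms (the inclusion $\{g(t,x)+h(t,y)>\lambda\}\subset\{g(t,x)>\lambda/2\}\cup\{h(t,y)>\lambda/2\}$). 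This makes the transport maps and the $I_1/I_2$ splitting unnecessary; the marginal inequality plays exactly the role that the identity $\mu(\{|u\circ S|>\lambda\})=(\rho^+\L^1\otimes\L^n)(\{|u|>\lambda\})$ plays in the paper, so the mechanisms are really the same, just expressed at the level of superlevel-set measures rather than compositions with Monge maps. One small point worth flagging, which the paper also glosses over: to pass from $\|f\|_{L^{1,\infty}(\mu)}\le\eps\|\rho\|_{L^\infty}$, $\|f\|_{L^{p,\infty}(\mu)}\le b$, $\mu(X)\le m$ to the stated upper bound with those quantities substituted into the right-hand side of \eqref{interpol}, one should note that $t\mapsto t\bigl[1+\log(m^{1-1/p}b/t)\bigr]$ is increasing on $(0,m^{1-1/p}b]$; your remark about absorbing the trivially bounded regime addresses the complementary case.
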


	\begin{proof}
		Let us first bound the expression on the left-hand side by the sum $I_1+I_2$ where
		\begin{align*}
			I_1 & = \int_0^T\iint \min\left\{\frac{|u(t,x)|}{\delta}, \Ua(t,x)\right\}\, d\pi_t(x,y)\, dt \\&\quad + \int_0^T\iint \min\left\{\frac{|u(t,y)|}{\delta}, \Ua(t,y)\right\}\, d\pi_t(x,y)\, dt, \\
			I_2 & = \int_0^T\iint \min\left\{\frac{|u(t,x)|}{\delta}, \Ua(t,y)\right\}\, d\pi_t(x,y)\, dt \\&\quad + \int_0^T\iint \min\left\{\frac{|u(t,y)|}{\delta}, \Ua(t,x)\right\}\, d\pi_t(x,y)\, dt\,. 
		\end{align*}
		Thanks to the marginal condition \eqref{marginal}, the diagonal terms in $I_1$ immediately simplify to
		\[
			I_1 = \int_0^T\int \min\left\{\frac{|u|}{\delta},\Ua\right\} |\rho|\, dx\, dt,
		\]
		because $\rho^+ + \rho^- = |\rho|$.
		We write $\psi = \min\{|u|/\delta,\Ua\}$ for notational convenience. The difficulty in estimating $\psi$ in the $L^1$ norm comes from the fact that $\Ua$ is bounded only in the weaker space $L^{1,\infty}$, cf.~\eqref{11}. The term $\delta^{-1}|u|$ on the other hand is controlled even in $L^{p,\infty}$, cf.~\eqref{u-1}, but with a large factor $\delta^{-1}$. Following the strategy developed in \cite{BouchutCrippa13}, we combine the controls in the spaces $L^{1,\infty}$ and $L^{p,\infty}$ with the help of the interpolation inequality \eqref{interpol}. For this, we introduce the finite measure $d\mu(t,x)  = \chi_{(0,T)}(t)|\rho(t,x)|d\L^1\otimes d\L^n$ on $\R^{n+1}$.
		We then have on the one hand that
		\begin{equation}\label{A}
			\|\psi\|_{L^{1,\infty}(\mu)} \le \|\Ua\|_{L^{1,\infty}(\mu)}\le \|\rho\|_{L^{\infty}} \|\Ua\|_{L^1(L^{1,\infty})} \le \eps \|\rho\|_{L^{\infty}},
		\end{equation}
		where in the last inequality we have used \eqref{11}.
		On the other hand, we have
		\begin{equation}\label{B}
			\|\psi\|_{L^{p,\infty}(\mu)} \leq \frac1{\delta} \|u\|_{L^{p,\infty}(\mu)}\le  \frac1{\delta} \|\rho\|_{L^{\infty}}^{\frac1p}\|u\|_{L^{p,\infty}},
		\end{equation}
		and the expression on the right is finite thanks to \eqref{u-1}.												
		Combining these two estimates with the interpolation inequality \eqref{interpol} then yields
				
		\[
			I_1  = \|\psi\|_{L^1(\mu)} \lesssim \eps \|\rho\|_{L^{\infty}}\left(1+\log\left(\frac1{\eps\delta}\left(\frac{\|\rho\|_{L^1}}{\|\rho\|_{L^{\infty}}}\right)^{1-\frac1p}\|u\|_{L^{p,\infty}}\right)\right).
		\]

		The estimate of the off-diagonal terms $I_2$ is quite similar. 
		It makes in addition use of the optimal transport maps introduced in \eqref{2}. It holds that
		\[
			I_2 = \int_0^T\int \min\left\{\frac{|u\circ S|}{\delta},\Ua\right\}\rho^-\, dy\, dt +  \int_0^T\int \min\left\{\frac{|u\circ T|}{\delta},\Ua\right\}\rho^+\, dx\, dt=:I_2^a + I_2^b,
		\]
		where the composition acts in the spatial variable only. The treatment of both terms $I_2^a$ and $I_2^b$ is very similar and it is thus enough to focus on one of them, say $I_2^a$. 
		We set $\psi = \min\{|u\circ S|/\delta, \Ua\}$ and define the finite measure $d\mu(t,x)  = \chi_{(0,T)}(t)\rho^-(t,x)d\L^1\otimes d\L^n$ on $\R^{n+1}$. The estimate in \eqref{A} applies without changes with the new choices of $\psi$ and $\mu$. 
		Also the final estimate in \eqref{B} remains valid; its derivation, however, needs a small modification. In fact, similarly as before, we obtain
		\[
			\|\psi\|_{L^{p,\infty}(\mu)} \le \frac1{\delta} \|u\circ S\|_{L^{p,\infty}(\mu)}.
		\]
		We now use the relation $\rho^+= S_{\#}\rho^-$ to the effect that
		\[
			\mu(\{ |u\circ S|>\lambda\})   =\Big(S_{\#}\rho^- \L^1\otimes \L^n\Big)(\{|u|>\lambda\}) =\Big(\rho^+ \L^1\otimes \L^n\Big)(\{|u|>\lambda\}).
		\]
		From this, the final estimate in \eqref{B} follows. 			
		It remains to argue as for $I_1$ to conclude.
	\end{proof}																											
	The integral estimate of the second term in \eqref{15} is contained in the following lemma.																									
	\begin{lemma}\label{L3}
		It holds that
		\[
			\int_0^T \iint (\Ub(t,x) + \Ub(t,y))\, d\pi_t(x,y) \leq  C_{\eps} \|\rho\|_{L^{\infty}(L^2)} .
		\]
	\end{lemma}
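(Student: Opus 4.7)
The plan is to use the marginal conditions of the transport plan $\pi_t$ to collapse the double integrals into single integrals against $|\rho_t|$, and then apply Cauchy--Schwarz in space together with the bound $\|G^2_\eps\|_{L^1(L^2)} \le C_\eps$ from~\eqref{11}.

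First I would observe that, since $G^2_\eps(t,x)$ depends only on the $x$-variable, the marginal property~\eqref{marginal} gives
\[
\iint G^2_\eps(t,x)\, d\pi_t(x,y) = \int G^2_\eps(t,x)\rho_t^+(x)\, dx,
\]
and analogously $\iint G^2_\eps(t,y)\, d\pi_t(x,y) = \int G^2_\eps(t,y)\rho_t^-(y)\, dy$. Adding these two identities and using $\rho^+ + \rho^- = |\rho|$, the quantity to be estimated becomes
\[
\int_0^T \iint \bigl(\Ub(t,x) + \Ub(t,y)\bigr) d\pi_t(x,y)\, dt = \int_0^T\int G^2_\eps(t,x)\,|\rho(t,x)|\, dx\, dt.
\]

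Next I would apply Cauchy--Schwarz in $x$ at each fixed time to obtain $\int G^2_\eps(t,\cdot)|\rho(t,\cdot)|\, dx \le \|G^2_\eps(t,\cdot)\|_{L^2}\|\rho(t,\cdot)\|_{L^2}$, and then Hölder in time to bound this by $\|G^2_\eps\|_{L^1(L^2)}\|\rho\|_{L^\infty(L^2)}$. Invoking~\eqref{11} to replace $\|G^2_\eps\|_{L^1(L^2)}$ by $C_\eps$ yields the desired estimate.

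No step here looks like a serious obstacle: the only subtlety is checking that the integrand in $G^2_\eps(t,x)+G^2_\eps(t,y)$ is $\pi_t$-measurable and that~\eqref{marginal} can be applied to each summand separately, which is immediate since each summand depends on only one of the variables and $\rho_t^\pm \in L^1\cap L^\infty$. The ingredients are strictly simpler than in Lemma~\ref{L2}, where one had to combine $L^{1,\infty}$ and $L^{p,\infty}$ bounds through interpolation; here the stronger $L^2$ control on $G^2_\eps$ and the $L^\infty(L^2)$ bound on $\rho$ (which holds because $\rho\in L^\infty(L^1\cap L^\infty)$) make Cauchy--Schwarz alone sufficient.
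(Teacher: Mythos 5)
Your proposal is correct and follows essentially the same argument as the paper: collapse the double integral via the marginal conditions~\eqref{marginal} to obtain $\int_0^T\int G^2_\eps|\rho|\,dx\,dt$, then apply Cauchy--Schwarz in space and H\"older in time, and finally invoke the $L^1(L^2)$ bound on $G^2_\eps$ from~\eqref{11}.
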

																														 
	\begin{proof}[Proof of Lemma \ref{L3}]
		In view of the marginal conditions \eqref{marginal}, we write and estimate
		\[
			\int_0^T \iint \Ub(t,x) + \Ub(t,y)\, d\pi_t(x,y) =\int_0^T \int \Ub |\rho|\, dxdt\leq \|\rho\|_{L^\infty(L^2)} \|\Ub\|_{L^1(L^2)},
		\]
		where we have used the Cauchy-Schwarz inequality, knowing that $\rho\in L^{\infty}((0,T);L^2(\R^n))$ via interpolation of norms. It remains to apply \eqref{11}.																		  
	\end{proof}
	To conclude the proof of Proposition \ref{pr3}, we substitute \eqref{15} into the estimate of Lemma \ref{L1} and apply Lemmas \ref{L2} and \ref{L3}.

	\section{Vanishing viscosity for 2D Euler equation}\label{sec:euler}
																									
	In this section we are going to exploit the uniqueness result for the linear equation to prove the second main theorem of the paper, namely Theorem \ref{Thm2}. The Cauchy problem for the two-dimensional Euler equations in vorticity formulation in $(0,T)\times\R^2$ is the following 
	\begin{equation}\label{eq:E}
		\left\{\begin{array}{rcll}
		\partial_t\omega+u\cdot\nabla\omega \!\!\!& = \!\!\!& 0          &  \mbox{in }(0,T)\times \R^2\,,\\
		u                                 \!\!\!  & =\!\!\! & k\ast \omega  &  \mbox{in }(0,T)\times \R^2\,,\\
		\omega|_{t=0}                    \!\!\!   & = \!\!\!& \omega_0 &  \mbox{in } \R^2\,,
		\end{array}      \right.     
	\end{equation}
	where we recall that the vorticity $\omega\in\R$ and the velocity field $u\in\R^2$ are unknown and  $k$ is the Biot-Savart kernel
	\begin{equation*}
		k(x)=\frac{1}{2\pi}\frac{x^{\perp}}{|x|^2}.
	\end{equation*}
	The initial datum is assumed to satisfy 
	\begin{equation}\label{eq:ide}
		\omega_0\in L_{c}^{1}(\R^2)\cap H^{-1}_{loc}(\R^2),
	\end{equation}
	where $L^1_c$ denotes the spaces of compactly supported integrable functions and  $\omega_0\in H^{-1}_{loc}$ means that $\psi\omega_0\in H^{-1}$ for any $\psi\in C^{\infty}_c$. This condition is important in the following as it provides a local bound on the kinetic energy for the velocity $u_0$.
	We recall, see \cite{BBC}, that if $\omega\in L^{\infty}((0,T);L^{1}(\R^2))$, the velocity field $u=k\ast \omega$ is in the class of velocity fields considered in Section \ref{sec:pre}. Indeed, the Biot-Savart kernel $k$ has distributional derivative given by 
	\begin{equation*}
		\partial_{j}k(x)=\frac{1}{2\pi}\partial_{j}\left(\frac{-x_2}{|x|^2},\frac{x_1}{|x|^2}\right)
	\end{equation*}
	and its Fourier transform $\widehat{\partial_{j}k^{i}}$ is bounded in $L^{\infty}(\R^2)$. %Then, it is well-known that the convolution operators $S^{i}_{j}$ defined by $S^{i}_{j}\omega=\partial_{j}k^{i}\ast \omega$ for every Schwartz function $\omega$ extend to mappings from $L^{1}(\R^2)$ to $L^{1,\infty}(\R^2)$ with 
	%\begin{equation*}
	%	\|S^{i}_{j}\omega\|_{L^{1,\infty}(\R^n)}\lesssim \|\omega\|_{L^1}.
	%\end{equation*}
	%We refer to \cite{BouchutCrippa13}, where the extension in the case $p=1$ is explained precisely.\par

	For any $\nu>0$, the Cauchy problem for the two dimensional Navier-Stokes equations in vorticity formulation is given by
	\begin{equation}\label{eq:NS}
		\left\{\begin{array}{rcll}
		\partial_t\omv+\uv\cdot\nabla\omv \!\!\!& = \!\!\!& \nu\Delta\omv & \mbox{in }(0,T)\times \R^2\,,\\
		\uv                              \!\!\! & = \!\!\!& k\ast\omv   & \mbox{in }(0,T)\times \R^2\,,  \\
		\omv|_{t=0}                    \!\!\!   & = \!\!\!& \omv_0 &\mbox{in }  \R^2\,,       
		\end{array}\right.
	\end{equation}
	where the vorticity $\omv\in\R$ and the velocity field $\uv\in\R^2$ are unknown. We assume that $\{\omv_0\}_{\nu}$ are supported in the same compact set and satisfy the following hypotheses:
	\begin{equation}\label{eq:idn}
		\left\{\begin{aligned}
		& \omv_{0}\in C^{\infty}_{c}(\R^2)\,,                                              \\
		& \{\omv_{0}\}_{\nu}\subset L^{1}(\R^2)\cap H^{-1}_{loc}(\R^{2}) \quad \textrm{ uniformly}\,, \\
		& \omv_{0}\rightarrow \omega_{0}\textrm{ in }L^{1}(\R^2)\,.                            
		\end{aligned}\right.
	\end{equation}
	We note that given $\omega_0\in L_{c}^{1}(\R^2)\cap H^{-1}_{loc}(\R^2)$ it is easy to construct (e.g., by convolution) a sequence $ \{\omv_{0}\}_{\nu}$ satisfying \eqref{eq:idn}. Finally, we recall, see \cite[Theorem 3.2A]{MajdaBertozzi02}, that given $\omv_{0}$ satisfying \eqref{eq:idn} there exists a unique smooth solution $(\uv,\omv)$ of the Cauchy problem \eqref{eq:NS}. The main goal of this section is to prove that, up to subsequences, the limit of  the sequence $(\uv,\omv)$ exists and satisfies the Euler equations \eqref{eq:E} as a Lagrangian and renormalized solution. For the definitions of Lagrangian and renormalized solutions we refer to the linear case, i.e., Definitions \ref{def:rs} and \ref{def:ls} above, which have both to be augmented by the Biot--Savart condition $u = k\ast \omega$. Notice that $JX\equiv1$ thanks to the incompressibility condition $\div u=0$.
						
	%The 
	%				
	%				
	%				Precise definitions, which are in fact analogous to the ones in the linear setting, follow, see Definition \ref{def:ls} below. Precisely, with Lagrangian solutions of the Euler equations we mean solutions in the following sense.
	%				\begin{definition}\label{def:lse}
	%					Let $\omega\in C([0,T);L^{1}(\R^2))$. The pair $(u,\omega)$ is a Lagrangian solutions of \eqref{eq:E} if 
	%						\begin{equation*}
	%							\begin{aligned}
	%								  & \omega=\omega_{0}(X^{-1}(t,\cdot)(x))\textrm{ for all $t\in(0,T)$ and a.e. $x\in\R^2$}, \\
	%								  & u=K\ast\omega\textrm{  a.e. in }(0,T)\times\R^2\quad \mbox{ and }                       \\
	%								  & \textrm{$X$ is a regular Lagrangian flow associated to }u.                              
	%							\end{aligned}
	%						\end{equation*}
	%					\end{definition}
	For the convenience of the reader we rewrite the statement of Theorem \ref{Thm2} in a more detailed form. 
	\begin{theorem}\label{teo:4}
		Let $\om_0$ and $\{\omv_0\}_{\nu}$ satisfying \eqref{eq:ide} and \eqref{eq:idn}. Let $(\uv, \omv)$ be the unique smooth solution of \eqref{eq:NS}. Then, there exists $$(u,\om)\in  L^{\infty}((0,T);L_{loc}^{2}\cap L^{2,\infty}(\R^{2}))\times L^{\infty}((0,T);L^{1}(\R^2))$$ such that, up to subsequences, for any $1\leq p<2$
		\begin{equation}\label{eq:conv1}
			\left\{\begin{aligned}
			& \uv\rightarrow u\textrm{ strongly in }L^{p}((0,T);L^{p}_{loc}(\R^2))                     \\
			& \omv\stackrel{*}{\rightharpoonup} \om\textrm{ weakly* in }L^{\infty}((0,T);L^{1}(\R^2))\,. 
			\end{aligned}\right.
		\end{equation}
		Moreover, $(u,\omega)$ satisfies the Euler equations in the sense of Lagrangian and renormalized solutions.
	\end{theorem}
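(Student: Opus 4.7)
The plan is to extract a limit $(u,\omega)$ of the viscous pairs $(\uv,\omv)$, verify that $u$ falls into the framework of Theorem \ref{th1}, and identify $\omega$ with the unique Lagrangian (equivalently renormalized) solution $\bar\omega$ of the linear continuity equation driven by $u$ via an adjoint-duality argument in the spirit of \cite{CrippaSpirito15}.

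Uniform estimates come from the smoothness of the viscous problem: the maximum principle for the vorticity equation gives $\|\omv(t)\|_{L^1(\R^2)}\le \|\omv_0\|_{L^1(\R^2)}$, so $\omv$ is uniformly bounded in $L^\infty((0,T);L^1(\R^2))$. The Biot--Savart formula combined with the weak Hardy--Littlewood--Sobolev inequality yields uniform $L^\infty((0,T);L^{2,\infty}(\R^2))$ control on $\uv$, while the uniform $H^{-1}_{\mathrm{loc}}$ assumption on $\omv_0$ together with the local kinetic energy inequality upgrades this to a uniform $L^\infty((0,T);L^{2}_{\mathrm{loc}}(\R^2))$ bound. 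The momentum equation yields a uniform bound on $\partial_t \uv$ in a sufficiently negative Sobolev space. Banach--Alaoglu plus Aubin--Lions--Simon then provide a subsequence along which $\omv \rightharpoonup^\ast \omega$ weakly-$\ast$ in $L^\infty(L^1)$ and $\uv \to u$ strongly in $L^p((0,T); L^{p}_{\mathrm{loc}}(\R^2))$ for every $1 \le p < 2$, with $u = k \ast \omega$ preserved in the limit. The strong $L^1$-convergence of $\omv_0$ propagates to equi-integrability of $\{\omv(t,\cdot)\}$ uniformly in $(t,\nu)$. Since $\grad u = (\grad k) \ast \omega$ with $\grad k$ satisfying K1)--K5) and $\div u = 0$, the velocity $u$ lies in the class covered by Theorem \ref{th1}; by \cite{BouchutCrippa13} the regular Lagrangian flow $X$ of $u$ is well-defined, and Corollary \ref{c2} shows that the push-forward $\bar\omega(t,\cdot) \ldef X(t)_{\#}\omega_0$ is both the unique Lagrangian and the unique renormalized solution of $\partial_t \bar\omega + u\cdot\grad\bar\omega = 0$ with initial datum $\omega_0$.

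The main obstacle is to prove $\omega = \bar\omega$: the weak $L^1$-convergence of $\omv$ alone does not allow one to pass to the limit in the viscous renormalized identity for $\beta(\omv)$, since $\beta(\omv)$ need not converge to $\beta(\omega)$. I therefore use an adjoint strategy. Given $\phi \in C_c^\infty(\R^2)$, solve the parabolic backward problem
\begin{equation}
	\partial_t \phiv + \uv \cdot \grad \phiv + \nu \laplace \phiv = 0 \quad \text{on } (0,T), \qquad \phiv(T,\cdot) = \phi;
\end{equation}
after time reversal this becomes a well-posed forward heat--transport equation with smooth drift $\uv$, so $\phiv$ is classical, and the maximum principle (using $\div \uv = 0$) gives $\|\phiv\|_{L^\infty} \le \|\phi\|_{L^\infty}$ uniformly in $\nu$. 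Bilinear pairing with $\omv$, together with integration by parts, produces the exact conservation identity
\begin{equation}
	\int \omv(T) \, \phi \, dx \;=\; \int \omv_0 \, \phiv(0,\cdot)\, dx.
\end{equation}
To pass to the limit $\nu \to 0$: the strong convergence $\uv \to u$ together with the vanishing viscous coefficient should drive $\phiv \to \bar\phi$ strongly in $L^{p}_{\mathrm{loc}}$ for every $p < \infty$, where $\bar\phi$ is the unique renormalized (equivalently Lagrangian) solution of the inviscid backward transport $\partial_t \bar\phi + u \cdot \grad \bar\phi = 0$ with $\bar\phi(T,\cdot) = \phi$, furnished by applying Corollary \ref{c2} to the adjoint problem; the Lagrangian formula gives $\bar\phi(0,\cdot) = \phi(X(T,\cdot))$. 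Combining this with the strong $L^1$-convergence of $\omv_0$, its equi-integrability, and the uniform $L^\infty$-bound on $\phiv(0,\cdot)$, one obtains in the limit
\begin{equation}
	\int \omega(T) \phi \, dx \;=\; \int \omega_0 \, \phi(X(T,\cdot))\, dx \;=\; \int \bar\omega(T) \phi \, dx,
\end{equation}
and arbitrariness of $\phi$ and $T$ yields $\omega = \bar\omega$, which is then Lagrangian and renormalized. The most delicate technical point, and the true heart of the argument, is the strong convergence of the adjoint variable $\phiv \to \bar\phi$ as $\nu \to 0$: this is a vanishing-viscosity/stability statement for the linear continuity equation with velocities of merely BC regularity, and will be established by combining weak compactness of $\phiv$ (via the uniform $L^\infty$ bound) with identification of the limit as the unique renormalized solution using Corollary \ref{c2}, and upgrading weak to strong convergence through preservation of the $L^2$ norm along the (renormalized) limiting flow.
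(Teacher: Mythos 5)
Your overall strategy coincides with the paper's: extract limits via uniform bounds, verify that $u$ falls into the class of Theorem~\ref{th1} (via $\grad u = \grad k\ast\omega$, K1)--K5) and the $L^{2,\infty}$ bound), construct the Lagrangian solution $\bar w$ of the linear equation from~\cite{BouchutCrippa13}, and identify $\omega$ with $\bar w$ through a viscous adjoint duality. But there are two points where your write-up is either incomplete or misattributes the key mechanism.

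First, you dispose of the claim $\omega\in L^{\infty}((0,T);L^1(\R^2))$ in one sentence (``strong $L^1$-convergence of $\omv_0$ propagates to equi-integrability''). This hides a genuinely non-trivial step. The local de la Vall\'ee-Poussin part is indeed obtained from the maximum principle applied to $G(|\omv|)$ with $G$ superlinear and convex adapted to the equi-integrability of $\{\omv_0\}$. But to invoke Dunford--Pettis on $\R^2$ one also needs tightness at spatial infinity, uniformly in $t$ and $\nu$, and this does not follow from the propagation of $G(|\omv|)$. The paper obtains it (their~\eqref{eq:equi6}--\eqref{eq:equi12}) by truncating $|\omv|\wedge M$, testing against a cut-off $\psi_r^R$ supported at scale $r$, and controlling the resulting $\frac1r\iint |\uv|\,(|\omv|\wedge M)$ via the decomposition of the Biot--Savart kernel $k=k_1+k_2$ with $k_1\in L^1$, $k_2\in L^\infty$. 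Without an argument of this type, your passage from ``$\omv\rightharpoonup^\ast\omega$ as measures'' to ``$\omega\in L^\infty(L^1)$'' is unjustified, and that conclusion is essential: only for $\omega\in L^\infty(L^1)$ does $u=k\ast\omega$ lie in the singular-integral class needed for Theorem~\ref{th1} and~\cite{BouchutCrippa13}.

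Second, you locate ``the true heart of the argument'' in upgrading the convergence $\phiv\to\bar\phi$ from weak to strong, and propose to do so via $L^2$-norm preservation along the renormalized flow. Both the necessity and the proposed mechanism are misplaced. Weak-$\ast$ convergence of $\phiv$ in $L^\infty((0,T);L^1\cap L^\infty)$, together with $\phiv\to\phi_1$ in $C([0,T);L^\infty_{w^\ast})$ (which follows from the equation) and the \emph{strong} $L^1$-convergence $\omv_0\to\omega_0$, is already enough to pass to the limit in $\int\omv_0\,\phiv|_{t=0}\,dx$; no strong convergence of $\phiv$ is required. Moreover, the $L^2$-norm-preservation argument would require knowing in advance that the weak limit $\phi_1$ is renormalized, which is exactly what is not available a priori. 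The actual crux, which your sketch glosses over, is this: the weak limit $\phi_1$ of $\phiv$ is only a \emph{distributional} solution of the backward transport equation with velocity $u$ in the class $L^\infty(L^1\cap L^\infty)$. Identifying it with the Lagrangian/renormalized backward solution (the one obtained from $\bar w$ by mollification, the paper's $\phi_2$) is precisely an application of the uniqueness of distributional solutions in Theorem~\ref{th1}, not of Corollary~\ref{c2} (which speaks only of the equivalence of renormalized and Lagrangian notions). Make this step explicit: $\phi_1-\phi_2$ is a distributional solution in $L^\infty(L^1\cap L^\infty)$ of the homogeneous backward problem, so $\phi_1=\phi_2$ by Theorem~\ref{th1}, and the duality formulas then yield $\omega=\bar w$. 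As written, your invocation of Corollary~\ref{c2} leaves the essential identification unproved.
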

	In the above theorem the weak* convergence in $L^{\infty}((0,T);L^{1}(\R^2))$ is intended in the duality with $L^{1}((0,T);L^{\infty}(\R^2))$.
	We divide the proof of Theorem \ref{teo:4} in several lemmas. In the first lemma we prove the compactness result stated in Theorem \ref{teo:4}.
	\begin{lemma}\label{lem:1}
		Under the hypothesis of Theorem \ref{teo:4} there exists $$(u,\om)\in  L^{\infty}((0,T);L_{loc}^{2}\cap L^{2,\infty}(\R^{2}))\times L^{\infty}((0,T);L^{1}(\R^2))$$ such that the convergences \eqref{eq:conv1} hold.
		%						\begin{equation}\label{eq:conv1}
		%							\begin{aligned}
		%								  & \uv\rightarrow u\textrm{ strongly in }L^{p}((0,T);L^{p}_{loc}(\R^2))                     \\
		%								  & \omv\stackrel{*}{\rightharpoonup} \om\textrm{ weakly* in }L^{\infty}((0,T);L^{1}(\R^2)). 
		%							\end{aligned}
		%						\end{equation}
	\end{lemma}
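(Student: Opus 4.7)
The plan is to extract the converging subsequence by combining uniform a priori bounds, a Dunford--Pettis type weak compactness argument for $\omv$ in $L^1$, and an Aubin--Lions type strong compactness argument for $\uv$. First I would establish the uniform bounds. Because $\omv$ solves a transport--diffusion equation with divergence-free drift, multiplying by $|\omv|^{p-2}\omv$ (or approximating for $p=1$) shows that $\|\omv(t)\|_{L^p}$ is non-increasing in $t$ for every $p\in[1,\infty]$, so~\eqref{eq:idn} gives $\|\omv\|_{L^{\infty}((0,T);L^1)}\le\|\omv_0\|_{L^1}\le C$ uniformly in $\nu$. The weak-type $L^1\to L^{2,\infty}$ estimate for convolution with $k\in L^{2,\infty}(\R^2)$ then yields a uniform bound for $\uv$ in $L^{\infty}((0,T);L^{2,\infty}(\R^2))$. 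Finally, the assumption $\omega_0\in H^{-1}_{\mathrm{loc}}$ together with the $L^1$ convergence $\omv_0\to\omega_0$ provides a uniform local $L^2$ bound on the initial velocities, which via a localized energy estimate for the Navier--Stokes system (the viscous term having the correct sign) propagates to a uniform bound of $\uv$ in $L^{\infty}((0,T);L^2_{\mathrm{loc}}(\R^2))$.

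Next I would extract the weak$^*$ limit of $\omv$ in $L^{\infty}((0,T);L^1)$. The strong $L^1$ convergence $\omv_0\to\omega_0$ furnishes, via de la Vall\'ee Poussin, a convex superlinear function $\beta\colon[0,\infty)\to[0,\infty)$ with $\beta(0)=0$ and $\sup_\nu\int\beta(|\omv_0|)\,dx<\infty$; the uniform compact support of $\{\omv_0\}$ provides initial tightness. Both properties are then propagated: testing the vorticity equation against $\beta'(|\omv|)\sgn(\omv)$ and using $\div\uv=0$ together with the convexity of $\beta$ yields
\[
\frac{d}{dt}\int\beta(|\omv(t)|)\,dx=-\nu\int\beta''(|\omv|)|\nabla\omv|^2\,dx\le 0,
\]
so the equi-integrability of $\omv(t,\cdot)$ holds uniformly in $\nu$ and $t$. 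Tightness at positive times is obtained by testing the equation against a smooth cutoff $\chi_R$ and using the uniform $L^{\infty}_tL^1_x$ and $L^{\infty}_tL^2_{\mathrm{loc},x}$ controls to bound $\frac{d}{dt}\int|\omv|\chi_R\,dx$. Dunford--Pettis then makes $\{\omv(t,\cdot)\}$ relatively weakly compact in $L^1(\R^2)$ uniformly in $t,\nu$, and combining with the $L^{\infty}_tL^1_x$ bound I may extract a subsequence satisfying $\omv\stackrel{*}{\rightharpoonup}\omega$ in the duality of $L^1((0,T);L^{\infty})$ with $L^{\infty}((0,T);L^1)$.

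For the strong convergence of $\uv$ I would argue via Aubin--Lions. Spatial compactness comes from the fact that $\nabla\uv$ is a singular integral of $\omv\in L^1$ and hence uniformly bounded in $L^{\infty}((0,T);L^{1,\infty})$; since $L^{1,\infty}_{\mathrm{loc}}\hookrightarrow L^q_{\mathrm{loc}}$ for every $q<1$, the family $\uv$ is bounded in $L^{\infty}((0,T);W^{1,q}_{\mathrm{loc}}(\R^2))$, which by Rellich is relatively compact in $L^p_{\mathrm{loc}}(\R^2)$ for every $p<2$. Temporal equicontinuity follows from the Navier--Stokes momentum equation, which shows that $\partial_t\uv$ is uniformly bounded in a suitable negative Sobolev space locally: the nonlinear term $\div(\uv\otimes\uv)$ is controlled by the local $L^2$ bound, while the viscous term $\nu\Delta\uv$ is controlled by the same bound multiplied by $\nu\le 1$. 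An Aubin--Lions lemma then delivers the strong convergence $\uv\to u$ in $L^p((0,T);L^p_{\mathrm{loc}}(\R^2))$ for every $p\in[1,2)$, completing the statement.

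The hard part is the equi-integrability step: the case $p=1$ is the borderline one, and without the renormalization-based propagation of a superlinear integral control, standard $L^p$ weak compactness would fail and we could at best obtain vague convergence of $\omv$ to a measure, with the possibility of mass concentration preventing the limit from being an $L^1$ function (as in Delort's setting with more singular data). The fact that the dissipative term in the Navier--Stokes vorticity equation has the good sign against \emph{every} convex $\beta$ is therefore the key ingredient that is specific to the viscous approximation and underlies the whole compactness argument.
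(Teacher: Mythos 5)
Your architecture is the same as the paper's (uniform bounds, equi-integrability via de la Vall\'ee Poussin propagated through the parabolic equation, tightness, and Aubin--Lions type compactness for $\uv$), and the equi-integrability step matches the paper almost verbatim. But two of your steps, as stated, do not close.

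\textbf{Spatial compactness of $\{\uv\}$.} Your argument that $\grad \uv$ is bounded in $L^\infty_t L^{1,\infty}_x$, hence in $L^\infty_t L^q_{\mathrm{loc}}$ for $q<1$, hence $\uv$ is bounded in $L^\infty_t W^{1,q}_{\mathrm{loc}}$ and Rellich applies, is not valid. For $q<1$ the space $W^{1,q}$ has no Rellich--Kondrachov theorem; there is no compact embedding $W^{1,q}_{\mathrm{loc}} \hookrightarrow\hookrightarrow L^p_{\mathrm{loc}}$ for $q<1$, and membership of $\grad u$ in $L^{1,\infty}$ does not yield compactness of $u$ in $L^p_{\mathrm{loc}}$ by a na\"ive Sobolev-type argument (an $L^1$-bounded family of vorticities can concentrate). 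The paper does not attempt this: it simply invokes the weak compactness theorem of DiPerna and Majda, which deduces strong $L^p_{\mathrm{loc}}$ ($p<2$) convergence of $\uv$ from the $L^\infty_t L^2_{\mathrm{loc}}$ bound, the $L^\infty_t L^1$ bound on $\omv$, and the $\mathrm{Lip}_t H^{-s}_{\mathrm{loc}}$ time regularity. If you want a self-contained argument you would need something closer to that, or a Fr\'echet--Kolmogorov argument exploiting the equi-integrability of $\omv$ alongside the decomposition of the Biot--Savart kernel.

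\textbf{Tightness at infinity.} Bounding $\frac{d}{dt}\int|\omv|\chi_R\,dx$ requires controlling the flux $\int |\omv|\,\uv\cdot\grad\chi_R\,dx$, and you cannot do this from the $L^\infty_t L^1_x(\omv)$ and $L^\infty_t L^2_{\mathrm{loc},x}(\uv)$ bounds alone: $L^1\times L^2$ does not land in $L^1$, and the $L^2_{\mathrm{loc}}$ control degrades as $R\to\infty$. The paper needs two extra ingredients at this point: (i) the truncation $\beta(s)=|s|\wedge M$, which makes the vorticity bounded so it can be paired against an $L^1$ velocity, and (ii) the splitting of the Biot--Savart kernel $k=k_1+k_2$ with $k_1=k\chi_{B_1}\in L^1$ and $k_2=k\chi_{B_1^c}\in L^\infty$, giving $\uv=\uv_1+\uv_2$ with $\uv_1\in L^1$ and $\uv_2\in L^\infty$. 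The flux is then bounded by $\frac{M}{r}\|\uv_1\|_{L^1}+\frac{1}{r}\|\uv_2\|_{L^\infty}\|\omv\|_{L^1}$, which goes to zero as $r\to\infty$; the piece of mass where $|\omv|>M$ is controlled separately by the equi-integrability estimate. These two devices are the crux of the localization argument and are missing from your sketch.
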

	\begin{proof}
					                
		By the uniform bounds on the initial datum \eqref{eq:idn} and the global existence of smooth solutions of the two-dimensional Navier-Stokes equations it follows that, see DiPerna and Majda \cite[Section 2A]{DPM1}, for any compact $Q\subset\R^2$ and some $s>0$ the following uniform bounds hold:
		\begin{equation}\label{eq:ub1}
			\begin{aligned}
				  & \sup_{\nu}\sup_{t\in(0,T)}\int_{Q}|\uv|^2\, dx \le C(Q),\qquad \sup_{\nu}\sup_{t\in(0,T)}\int|\omv|\leq C, \\
				\\
				  & \{\uv\}_{\nu}\subset \textrm{Lip}((0,T); H_{loc}^{-s}(\R^2)). 
			\end{aligned}
		\end{equation} 
		Then, by standard weak compactness arguments, see \cite[Theorem 1.1]{DPM1}, there exists $(u,\om)\in L^{\infty}(0,T,L^{2}_{loc}(\R^2))\times L^{\infty}((0,T);\mathcal{M}(\R^2))$, where $\mathcal{M}(\R^2)$ is the space of Radon measures, such that up to a subsequence, not relabeled, the following convergences hold,
		\begin{equation*}
			\left\{\begin{aligned}
			& \uv\rightarrow u\textrm{ strongly in }L^{p}((0,T);L^{p}_{loc}(\R^2))\,, \quad 1\leq p<2,                           \\
			& \omv\stackrel{*}{\rightharpoonup} \om\textrm{ weakly* in }L^{\infty}((0,T);\mathcal{M}(\R^2)). 
			\end{aligned}\right.
		\end{equation*}
		Moreover, by the weak Hardy-Littlewood-Sobolev inequality, see \cite[Lemma 4.5.7]{H} it holds
		\begin{equation}\label{eq:ub3}
			\|\uv\|_{L^{\infty}((0,T);L^{2,\infty}(\R^2))}\leq c\|\omv\|_{L^{\infty}((0,T);L^1(\R^2))},
		\end{equation}
		and this implies $u\in  L^{\infty}((0,T); L^{2,\infty}(\R^{2}))$.
		To prove that $\omega\in L^{\infty}((0,T);L^{1}(\R^2))$, by Dunford--Pettis theorem we just need to prove that the sequence $\{\omv\}_{\nu}$ is equi-integrable in space. We start by noticing that since the sequence of initial data $\{\omv_0\}_{\nu}$ is strongly convergent in $L^{1}_{c}(\R^2)$ there exists $G:[0,\infty]\to[0,\infty]$ such that $G\in C^{1}(\R)$, $G(0)=0$, $G$ is convex and increasing and 
		\begin{equation}\label{eq:equi1}
			\begin{aligned}
				  & \lim_{s\to\infty}\frac{G(s)}{s}=\infty\qquad \mbox{and}\qquad \sup_{\nu} & \int_{\R^2}G(|\omv_0(x)|)\,dx<\infty. 
			\end{aligned}
		\end{equation}
		Then, by a suitable truncation argument, it follows that $\omv$ satisfies
		\begin{equation}\label{eq:equi2}
			\partial_{t}|\omv|-\nu\Delta|\omv|+\uv\cdot\nabla|\omv|\leq 0.
		\end{equation}
		Multiplying \eqref{eq:equi2} by $G'(|\omv|)$ and integrating by parts and using the divergence-free condition we get 
		\begin{equation}\label{eq:equi3}
			\frac{d}{dt}\int_{\R^2}G(|\omv(t,x)|)\,dx+\nu\int_{\R^2}G''(|\omv(t,x)|)|\nabla|\omv||^2\,dx\leq 0. 
		\end{equation}
		Using that $G$ is convex, integrating in time and exploiting \eqref{eq:equi1}, there exists a constant $C>0$ independent on the viscosity $\nu$ such that 
		\begin{equation}\label{eq:equi4}
			\sup_{t\in(0,T)}\int_{\R^2}G(|\omv(t,x)|)\leq C. 
		\end{equation}
		In turn this implies that given any $\e>0$ there exists $\delta=\delta(\e)$ such that, for any measurable set $A\subset\R^2$ such that $\mathcal{L}^2(A)\leq \delta$, it holds
		\begin{equation}\label{eq:equi5}
			\sup_{t\in(0,T)}\int_{A}|\omv|\,dx\leq \e. 
		\end{equation}
		In order to conclude that $\omega\in L^{\infty}((0,T);L^{1}(\R^2))$ we need to prove that $\{\omv\}_{\nu}$ is equi-integrable at infinity. We start by noting that by \eqref{eq:idn} there exists a radius $\tilde{R}=\tilde{R}(\|\omega_0\|_{1})$ such that 
		\begin{equation}\label{eq:equi6}
			\int_{|x|>\tilde{R}}|\omv_0|\,dx=0.
		\end{equation}
		Let now $r$ and $R$ be such that $\tilde{R}<r<R /2$. Let  $\phi_{r}^{R}\in C^{\infty}_{c}(\R^{2})$ be the cut-off function defined as																								
		$$\psi_{r}^{R}(x)=\begin{cases} 0 &\mbox{if }|x|\in [0,r]\,,\\
		1&\mbox{if }|x|\in [2r,R]\,,\\
		0 &\mbox{if }|x|\in [2R,\infty)\,.
			\end{cases}$$
			Then, 
			\begin{equation}\label{eq:equi7}
				\begin{aligned}
					  & |\nabla \psi_{r}^{R}|\leq \frac{C}{r},\quad & |\nabla^{2}\psi_{r}^{R}|\leq\frac{C}{r^2}. 
				\end{aligned}
			\end{equation}
			Let $\beta\in C^{1}(\R)\cap L^{\infty}(\R)$ be a convex function, then by \eqref{eq:NS} we have
			\begin{equation}\label{eq:equi8}
				\partial_{t}\beta(\omv)+\uv\cdot\nabla\beta(\omv)-\nu\Delta\beta(\omv)+\nu\beta''(\omv)|\nabla\omv|^2=0.
			\end{equation}
			By multiplying \eqref{eq:equi8} by $\psi_{r}^{R}$, integrating by parts, integrating in time and using \eqref{eq:equi6} we get for all $t\in(0,T)$
			\begin{equation}\label{eq:equi9}
				\int\beta(\omv)\psi_{r}^{R}\,dx\leq \iint |\uv||\beta(\omv)||\nabla\psi_{r}^{R}|\,dxdt+\nu\iint |\beta(\omv)||\Delta\psi_{r}^{R}|\,dxdt.
			\end{equation}
			Let $M>0$. By a simple approximation argument we can choose $\beta(s)=|s|\wedge M$. Then, after sending $R\to\infty$ in \eqref{eq:equi9} and using \eqref{eq:equi7} we have 
			\begin{equation}\label{eq:equi10}
				\begin{aligned}
					\int_{\{|x|>2r\}}(|\omv|\wedge M)\,dx & \leq\frac{1}{r}\iint|\uv|(|\omv|\wedge M)\,dxdt   +\frac{\nu}{r^2}\iint|\omv|\wedge M\,dxdt. 
				\end{aligned}
			\end{equation}
			A simple manipulation leads to the following inequality for all $t\in(0,T)$
			\begin{equation}\label{eq:equi11}
				\begin{aligned}
					\int_{\{|x|>2r\}}|\omv|\,dx & \leq \int_{\{|\omv|>M\}}|\omv|\,dx                                                           \\
					                            & \leq\frac{1}{r}\iint|\uv|(|\omv|\wedge M)\,dxdt   +\frac{\nu}{r^2}\iint|\omv|\wedge M\,dxdt. 
				\end{aligned}
			\end{equation}
			Let us now decompose the kernel $k= k_1+k_2$, where $k_{1}=k\chi_{B_{1}(0)}\in L^{1}(\R^2)$ and $k_{2}= k\chi_{B_{1}(0)^{c}}\in L^{\infty}(\R^2)$. The decomposition of the kernel induces the decomposition $\uv=\uv_{1}+\uv_{2}$  and, by Young's inequality (for convolution), we have  
			the uniform bounds
			\begin{equation}\label{eq:ub2}
				\{\uv_1\}_{\nu}\in L^{\infty}((0,T);L^{1}(\R^2))\,,\qquad
				\{\uv_2\}_{\nu}\in L^{\infty}((0,T)\times\R^2) \,.  
			\end{equation}
			Using the above decomposition we infer that for all $t\in(0,T)$ 
			\begin{equation}\label{eq:equi12}
				\begin{aligned}
					\int_{\{|x|>2r\}}|\omv|\,dx & \leq\sup_{t\in(0,T)}\int_{\{|\omv|>M\}}|\omv|\,dx +\frac{MT}{r}\sup_{t\in(0,T)}\|\uv_1\|_{L^1(\R^2)}                                       \\
					                            & \quad +\frac{T}{r}\sup_{t\in(0,T)}\left(\|\uv_2\|_{L^2(\R^2)}\|\omv\|_{L^1(\R^2)}\right)   +\frac{\nu}{r^2}\|\omega^{\nu}_0\|_{L^1(\R^2)} \\
					                            & =(I)+(II)+(III)+(IV).                                                                                                                   
				\end{aligned}
			\end{equation}
			We are now going to estimate all the term separately: First, we note that for any $t\in(0,T)$ we have that 
			\begin{equation}\label{eq:equi13}
				\begin{aligned}
					\mathcal{L}^2(\{x\in\R^2:|\omv(t,x)|>M\} & \leq \frac{1}{M}\|\omv\|_{L^1(\R^2)}     \\
					                                         & \leq \frac{1}{M}\|\omv_0\|_{L^1(\R^2)}        \\
					                                         & \leq \frac{C}{M}\|\omega_{0}\|_{L^1(\R^2)}. 
				\end{aligned}
			\end{equation}
			Let $\e>0$ and $\delta=\delta(\e)$ given in \eqref{eq:equi5}, then we can choose $M=M(\e)$, independent of the time, such that
			\begin{equation*}
				\mathcal{L}^2(\{x\in\R^2:|\omv(t,x)|>M\})\leq \delta,\quad\textrm{ for any }t\in(0,T).
			\end{equation*}                                                  
			Then, by \eqref{eq:equi5}, 
			\begin{equation*}
				\sup_{t\in(0,T)}\int_{\{|\omv|>M\}}|\omv|\,dx\leq\frac{\e}{4}.
			\end{equation*}
			With this choice of $M$ fixed, since we can assume without loss of generality $\nu<1$, we can infer that there exists $r=r(\e)$ such that 
			\[
				(II)\leq\frac{\e}{4},\quad  (III)\leq\frac{\e}{4},\quad  (IV)\leq\frac{\e}{4}. 
			\]
			Then, we have just proved that for any $\e>0$ there exists $r=r(\e)$ such that 
			\begin{equation*}
				\sup_{t\in(0,T)}\int_{\{|x|>2r\}}|\omv|\,dx\leq \e.
			\end{equation*}
			This together with \eqref{eq:equi5} implies 						
			\begin{equation*}
			%\label{eq:conv2}
			\omv\stackrel{*}{\rightharpoonup} \om\textrm{ in }L^{\infty}((0,T);L^{1}(\R^2)).\qedhere
			\end{equation*}
		\end{proof}
		In the following lemma we prove a duality formula for the limit $(u,\omega)$ obtained in Lemma~\ref{lem:1}. 
		\begin{lemma}\label{lem:2}
			Let $(u,\omega)$ be as in Lemma~\ref{lem:1}. Then, for any $\chi\in C^{\infty}_{c}((0,T)\times\R^2)$ there exists $\phi_{1}\in L^{\infty}((0,T);L^{1}\cap L^{\infty}(\R^{2}))$ solving in the sense of distributions 
			\begin{eqnarray}\label{eq:BP3}\left\{\begin{array}{rcll}
				-\partial_t\phi_1-\div(u\phi_1) \!\!\! & =\!\!\! & \chi    &\mbox{in }(0,T)\times\R^2\,,   \\
				\phi_{1}|_{t=T}               \!\!\!  & =\!\!\! & 0      &\mbox{in } \R^2\,,    
				\end{array}\right.
			\end{eqnarray}
			with $u= k\ast\omega$. Moreover, the following duality formula holds 
			\begin{equation}\label{eq:BP4}
				\iint\chi\omega\,dxdt=\int\omega_0\phi_{1}|_{t=0}\,dx.
			\end{equation}
		\end{lemma}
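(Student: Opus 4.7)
The plan is to obtain $\phi_1$ as a distributional limit of solutions to viscous backward Cauchy problems built on the smooth Navier--Stokes velocities $\uv$, and to derive \eqref{eq:BP4} from a pairing at the viscous level.

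For each $\nu>0$, let $\phi_1^\nu$ denote the unique classical solution of
\[
-\partial_t\phi_1^\nu - \uv\cdot\nabla\phi_1^\nu - \nu\Delta\phi_1^\nu=\chi,\qquad \phi_1^\nu|_{t=T}=0,
\]
which exists and is smooth because $\uv$ is smooth and divergence-free. The maximum principle yields the uniform bound $\|\phi_1^\nu(t)\|_{L^\infty}\le(T-t)\|\chi\|_{L^\infty}$, while multiplying the equation by a smooth approximation of $\sgn(\phi_1^\nu)$ and exploiting $\div\uv=0$ produces $\|\phi_1^\nu(t)\|_{L^1}\le\int_t^T\|\chi(s,\cdot)\|_{L^1}\,ds$ uniformly in $\nu$.

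Up to a subsequence, $\phi_1^\nu\stackrel{\ast}{\rightharpoonup}\phi_1$ in $L^\infty((0,T);L^1\cap L^\infty(\R^2))$. Using the equation to bound $\partial_t\phi_1^\nu$ in a negative Sobolev space and an Aubin--Lions-type argument, this upgrades to $\phi_1^\nu\to\phi_1$ in $C_w([0,T];L^\infty\text{-}w^\ast)$, so in particular $\phi_1^\nu(0)\stackrel{\ast}{\rightharpoonup}\phi_1(0)$. Combining the weak-$\ast$ convergence of $\phi_1^\nu$ with the strong $L^p_{\mathrm{loc}}$ convergence $\uv\to u$ from Lemma \ref{lem:1}, we pass to the limit in $\uv\phi_1^\nu$ and drop the vanishing viscous term to conclude that $\phi_1$ solves \eqref{eq:BP3} in the sense of distributions. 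Uniqueness of $\phi_1$ (and hence convergence of the whole sequence) follows from Theorem \ref{th1} applied to the time-reversed equation, the source term being handled by linearity.

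To establish \eqref{eq:BP4}, multiply the vorticity Navier--Stokes equation by $\phi_1^\nu$, integrate over $(0,T)\times\R^2$ and integrate by parts. Using $\phi_1^\nu(T)=0$, $\div\uv=0$, and the equation satisfied by $\phi_1^\nu$, we arrive at
\[
\int_0^T\!\int\chi\,\omv\,dx\,dt \;=\; \int\omv_0\,\phi_1^\nu(0)\,dx.
\]
The left-hand side converges to $\iint\chi\omega\,dx\,dt$ by the weak-$\ast$ convergence of $\omv$ tested against $\chi\in C^\infty_c$. For the right-hand side we pair the strong $L^1$ convergence $\omv_0\to\omega_0$ with the weak-$\ast$ convergence $\phi_1^\nu(0)\stackrel{\ast}{\rightharpoonup}\phi_1(0)$. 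The main technical obstacle is precisely this last step, which requires careful handling of the time-trace of the limiting distributional solution $\phi_1$ at $t=0$ and the identification of $\phi_1^\nu(0)$'s weak-$\ast$ limit with that trace.
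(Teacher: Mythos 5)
Your proposal follows essentially the same route as the paper's proof: the paper also considers the backward viscous dual problems (its \eqref{eq:BP1}, identical to yours since $\div\uv=0$), derives the uniform $L^\infty((0,T);L^1\cap L^\infty)$ bounds plus the diffusive bound $\sqrt{\nu}\nabla\phiv\in L^2$, extracts a weak-$\ast$ limit, upgrades to convergence in $C((0,T);L^\infty_{w^*})$ using the equation (this is exactly how the paper handles the time trace you flag as the delicate point), and multiplies the Navier--Stokes vorticity equation by $\phiv$ to produce the viscous duality identity \eqref{eq:BP2}, which is then passed to the limit. The one step you omit is the preliminary verification that $u= k\ast\omega$ holds a.e.\ in $(0,T)\times\R^2$, which is part of the statement of the lemma and which the paper establishes first by testing $\uv - k\ast\omv = 0$ against $\eta\in C^\infty_c$, moving the convolution onto $\eta$, and invoking the convergences of Lemma~\ref{lem:1} together with $k\ast\eta\in L^\infty((0,T)\times\R^2)$; you should add this short argument, but the rest of your plan coincides with the paper's.
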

		\begin{proof} First we prove that $u=k\ast\omega$ a.e.~in $(0,T)\times\R^{2}$. Let $\eta\in C^{\infty}_{c}((0,T)\times\R^{2})$, then 
			\begin{equation}\label{eq:bs1}
				\begin{aligned}
					0=\lim_{\nu\to 0}\iint(\uv-(k\ast\omv))\eta\,dxdt & =\lim_{\nu\to 0}\iint\uv\eta-\omv(k\ast\eta)\,dxdt \\
					                                                  & =\iint u\eta-\omega(k\ast\eta)\,dxdt               \\
					                                                  & =\iint (u-(k\ast\omega))\eta\,dxdt,                 
				\end{aligned}
			\end{equation}
			where the convergences \eqref{eq:conv1} have been used together with the bound $k\ast\phi\in L^{\infty}((0,T)\times\R^{2})$, which holds because $\eta\in C^{\infty}_{c}((0,T)\times\R^{2})$.\par					Let $\chi\in C^{\infty}_{c}((0,T)\times\R^2)$ and let $\phiv$ be the unique smooth solution of the following Cauchy problem
			\begin{eqnarray}\label{eq:BP1}
				\left\{\begin{array}{rcll}
				-\partial_t\phiv-\nu\Delta\phiv-\div(\uv\phiv) & =&\chi & \quad \mbox{ in } (0,T)\times\R^2 \,,\\
				\phiv|_{t=T}                                   & =& 0   & \quad \mbox{ in } \R^2\,,
				\end{array}
				\right.
			\end{eqnarray}
			where $\{\uv\}_{\nu}$ is the subsequence chosen in Lemma \ref{lem:1}. 
			Then, by multiplying \eqref{eq:NS} by $\phiv$ and integrating by parts we get 
			\begin{equation}\label{eq:BP2}
				\iint\omv\chi\,dxdt=\int\omv_0\phiv|_{t=0}\,dx.
			\end{equation}
			Since $\chi\in C^{\infty}_{c}((0,T)\times\R^2)$, by standard energy estimates
			it follows that 
			\begin{equation}\label{eq:ub4}
				\left\{\begin{aligned}
					  & \{\phiv\}_{\nu}\subset L^{\infty}((0,T);L^{\infty}\cap L^{1}(\R^2))\,, \\
					  & \{\sqrt{\nu}\nabla\phiv\}_{\nu}\subset L^{2}((0,T)\times\R^2)\,,       
				\end{aligned}\right.
			\end{equation}
			with uniform bounds. Then up to subsequences there exists $\phi_{1}\in L^{\infty}((0,T);L^{\infty}\cap L^{1}(\R^2))$ such that 
			\begin{equation}\label{eq:conv3}
				\phiv\stackrel{*}{\rightharpoonup}\phi_1\textrm{ in }L^{\infty}((0,T);L^{\infty}\cap L^{1}(\R^2)).
			\end{equation}
			Moreover, by using \eqref{eq:BP1} the convergence in \eqref{eq:conv3} can be upgraded to 
			\begin{equation}\label{eq:conv4}
				\phiv\rightarrow\phi_1\textrm{ in }C((0,T); L_{w^*}^{\infty}(\R^2)),
			\end{equation}
			where $C((0,T); L_{w^*}^{\infty}(\R^2))$ is the space of continuous functions with value in $L^{\infty}(\R^{2})$ endowed with the weak$^*$ topology. Then, by \eqref{eq:conv1}, \eqref{eq:BP2}, \eqref{eq:ub4} and \eqref{eq:conv3} it follows that $\phi_1$ is a distributional solution of the backward transport Cauchy problem \eqref{eq:BP3} and 
			%						\begin{equation}\label{eq:BP3}
			%							\begin{array}{rll}
			%								-\partial_t\phi_1-\div(u\phi_1) & = & \chi        \\
			%								u                               & = & K\ast\omega \\
			%								\phi_{1}|_{t=T}                 & = & 0           
			%							\end{array}
			%						\end{equation}
			%						and the following duality formula holds 
			%						\begin{equation}\label{eq:BP4}
			%							\iint\chi\omega\,dxdt=\int\omega_0\phi_{1}|_{t=0}\,dx,
			%						\end{equation}
			\eqref{eq:BP4} holds. 
		\end{proof}
		Now, we are in position to prove Theorem \ref{teo:4}. 
		\begin{proof}[Proof of Theorem \ref{teo:4}]
			Let us consider the Cauchy problem for the following linear continuity equation, 
			\begin{equation}\label{eq:LC1}
				\left\{\begin{array}{rlll}
				\partial_{t}w+\div(uw) & = & 0& \quad \mbox{ in } (0,T)\times \R^2\,,           \\
				w|_{t=0}               & = & \omega_0 & \quad \mbox{ in } \R^2 \,,   
				\end{array}\right.
			\end{equation}
			with $u=k\ast\omega$.
			We regularize the velocity field and the initial datum by using classical mollification. Then, we obtain sequences $\{u_{\delta}\}_{\delta}$ and $\{\omega_{0,\delta}\}_{\delta}$. By the standard 
			Cauchy-Lipschitz theory we can find a sequence of smooth functions $\{w^{\delta}\}_{\delta}$ uniformly bounded in $ L^{\infty}((0,T);L^{1}(\R^2))$ such that 
			\begin{equation}\label{eq:LC2}
				\left\{\begin{array}{rlll}
				\partial_{t}w^{\delta}+\div(u_{\delta}w^{\delta}) & = & 0   &\quad \mbox{ in } (0,T)\times \R^2\,,            \\
				w^{\delta}|_{t=0}                                 & = & \omega_{0,\delta} &\quad \mbox{ in } \R^2\,. 
				\end{array}\right.
			\end{equation}
			It is proved in \cite{BouchutCrippa13} that Lagrangian solutions are stable under smooth approximation. Therefore, there exists $\bar{w}\in C((0,T);L^{1}(\R^2))$ such that 
			\begin{equation*}
				w^{\delta}\rightarrow \bar{w}\textrm{ in }C((0,T);L^{1}(\R^2))
			\end{equation*}
			and $\bar{w}$ is a Lagrangian solution of \eqref{eq:LC1} in the sense of Definition \ref{def:ls}. Note that this is the first crucial point in this section where we really need to use the fact that $\omega\in L^{\infty}((0,T);L^{1}(\R^2))$. Finally, let $\chi\in C^{\infty}_{c}((0,T)\times\R^2)$ and let $\phi^{\delta}$ be the unique smooth solution of the following backward transport Cauchy problem
			\begin{equation}\label{eq:BT1}
				\left\{\begin{array}{rlll}
				-\partial_t\phi^{\delta}-\div(u_{\delta}\phi^{\delta}) & = & \chi  &\quad \mbox{ in } (0,T)\times \R^2\,,\\
				\phi^{\delta}|_{t=T}                                   & = & 0 &\quad \mbox{ in } \R^2\,.  
				\end{array}\right.
			\end{equation}
			Arguing as in Lemma \ref{lem:2}, we can infer that there exists $\phi_{2}\in L^{\infty}((0,T);L^{\infty}\cap L^{1}(\R^2))$ distributional solution of 
			\begin{eqnarray}\label{eq:BT2}
				\left\{\begin{array}{rclll}
				-\partial_t\phi_2-\div(u\phi_2) & =\chi  & \quad  \mbox{ in } (0,T)\times \R^2\,,   \\
				\phi_{2}|_{t=T}                 & =0 & \quad \mbox{ in } \R^2\,,          
				\end{array}\right.
			\end{eqnarray}
			with $u=k\ast\omega$ and 
			\begin{equation}\label{eq:BT3}
				\iint\chi\bar{w}\,dxdt=\int\omega_0\phi_{2}|_{t=0}\,dx.
			\end{equation}
			We need to prove that $\phi_1=\phi_2$. By subtracting \eqref{eq:BP3} and \eqref{eq:BT2} we have that the difference $\phi_1-\phi_{2}$ is a distributional solution in $L^{\infty}(0,T;L^{1}\cap L^{\infty}(\R^{2}))$ of the Cauchy problem 
			\begin{equation}\label{eq:main}
				\left\{\begin{array}{rlll}
				\partial_t(\phi_1-\phi_{2})+\div(u(\phi_1-\phi_{2})) & = & 0  & \quad  \mbox{ in } (0,T)\times \R^2\,,  \\
				(\phi_1-\phi_{2})|_{t=T}                             & = & 0 & \quad \mbox{ in } \R^2\,.
				\end{array}\right.
			\end{equation}
			Since the velocity field $u$ satisfies K1)--K5) and \eqref{eq:ub3}, by Theorem \ref{th1} it follows that $\phi_{1}=\phi_{2}$ a.e. in $(0,T)\times\R^2$.\par We are ready to conclude: Subtracting \eqref{eq:BP4} and \eqref{eq:BT3} we have that 
			\begin{equation*}
				\iint\chi(\omega-\bar{w})\,dxdt=0\,
			\end{equation*}
			and by varying $\chi\in C^{\infty}_{c}((0,T)\times\R^2)$ we obtain $\omega=\bar{w}$ a.e. in $(0,T)\times\R^2$ and then $\omega$ is Lagrangian and renormalized. 
		\end{proof}

		\subsection*{Acknowledgement}
		G.~Crippa is partially supported by the Swiss National Science Foundation grant 200020\_156112 and by the ERC Starting Grant 676675 FLIRT.\break S.~Spirito acknowledges the support of INdAM-GNAMPA. Part of this work has been done when C.~Nobili was visiting researcher at the Collaborative Research Center (CRC) of the University of Bonn.

		\bibliography{transport}
		\bibliographystyle{acm}

		\end{document}